\newtheorem{thm}{Theorem}[section]
\newtheorem{lem}[thm]{Lemma}
\newtheorem{prop}[thm]{Proposition}
\newtheorem{cor}[thm]{Corollary}
\theoremstyle{definition}
\newtheorem{rem}[thm]{Remark}
\newtheorem{defn}[thm]{Definition}
\def\Z{\mathbb{Z}}
\def\Q{\mathbb{Q}}
\def\F{\mathbb{F}}
\def\scom{s\mathcal{C}\mathrm{om}_\Q}
\def\dcom{d\mathcal{C}\mathrm{om}_\Q}
\def\leq{\leqslant}
\def\geq{\geqslant}
\def\harr{{\sf Harr}}
\def\Com{{\sf Com}}
\def\Sh{\sf{Sh}}
\def\hot{\hat{\otimes}}
\def\hodot{\hat{\odot}}
\def\ra{\rightarrow}
\def\AQ{{\sf AQ}}
\def\Sp{\text{$\sf{Sp}$}}
\def\modules{\text{$\sf{mod}$}}
\def\smod{\text{$s\sf{mod}$}}
\def\dgmod{\text{dg$\sf{mod}$}}
\def\sigmamod{\text{$\sf{mod}\Sigma$}}
\def\nonnegch{\text{$\sf{Ch}_{\geq 0}$}}
\def\sigmadg{\text{$\sf{dg}\Sigma$ }}
\def\sigmas{\text{$\sf{s}\Sigma$ }}
\def\sigmadgred{\text{$\sf{dg}\Sigma_+$}}
\def\comsigmadg{C^+_\sigmadg}
\def\comsigmas{C^+_\sigmas}
\DeclareMathOperator{\colim}{colim}
\def\ie{\emph{i.e.}}
\def\eg{\emph{e.g.}}
\def\id{\mathrm{id}}
\def\t{\odot}
\begin{document}
\title{
On the homology and homotopy of commutative shuffle algebras}
\author{Birgit Richter}
\address{Fachbereich Mathematik der Universit\"at Hamburg,
Bundesstra{\ss}e 55, 20146 Hamburg, Germany}
\email{richter@math.uni-hamburg.de}
\urladdr{http://www.math.uni-hamburg.de/home/richter/}
\date{\today}
\keywords{Andr\'e-Quillen homology,
Gamma homology, symmetric sequences, Harrison homology, Dold-Kan correspondence}
\subjclass[2000]{Primary 13D03; Secondary 18G25}

\begin{abstract}
For commutative algebras there are three important homology
theories, Harrison homology, Andr\'e-Quillen homology and
Gamma-homology. In general these differ, unless one works with
respect to a ground field of characteristic zero. We show that the
analogues of these homology theories agree in the category of
 pointed commutative monoids in symmetric sequences and that
 Hochschild homology always possesses a Hodge decomposition in this
 setting. In addition we prove that the category of pointed 
differential graded commutative monoids in symmetric sequences has a
model structure and that it is Quillen equivalent to the model 
category of pointed simplicial commutative monoids in symmetric sequences.  
\end{abstract}
\maketitle

\section{Introduction}

Symmetric sequences are used in many areas of mathematics: Joyal
\cite{Joyal} 
baptized them \emph{species}, they are important in the theory of
operads and one symmetric monoidal model of the stable homotopy
category is given by symmetric spectra \cite{hss} and these are spectra
that are built out of symmetric sequences in spaces (simplicial
sets).

Let $k$ be an arbitrary commutative ring with unit. The aim of this
paper is to understand homology theories for commutative monoids in
symmetric sequences and to investigate the corresponding homotopy
theory of simplicial and differential graded objects. A crucial
ingredient for our investigation is a result by 
Stover. He proved \cite[9.10]{St} that the
norm map $N = \sum_{\sigma \in \Sigma_n}\sigma $ induces an
isomorphism between coinvariants and invariants of tensor powers 
\begin{equation} \label{eq:norm}
N \colon (V^{\t n})_{\Sigma_n} \rightarrow (V^{\t n})^{\Sigma_n}
\end{equation}
for all $n \geq 1$ and all reduced symmetric sequences $V$, thus the
zeroth Tate
cohomology group of $\Sigma_n$ with coefficients in $V^{\otimes n}$
vanishes. Therefore,  
quite often arguments work in this context that otherwise only hold in
characteristic zero. A second way to interpret Stover's result is that
the difference between commutative monoids and commutative monoids
with divided power structures disappears. In fact we show that there
is a natural way to associate an ordinary graded divided power algebra
to every commutative monoid in symmetric sequences (see Theorem \ref{thm:pd}).

For commutative algebras $A$ over a commutative ring $k$ there are several
important homology theories: Andr\'e-Quillen homology, $\AQ_*$,  takes derived
functors of indecomposables in the simplicial sense, Harrison
homology, $\harr_*$,
takes the indecomposables of the Hochschild complex and
$\Gamma$-homology, $H\Gamma_*$, views the commutative algebra as an
$E_\infty$-algebra and takes its homology in this setting. An
identification \cite{PR} says that $\Gamma$-homology can also we
viewed as the stabilization of a commutative algebra: The category of
commutative augmented algebras is enriched in the category of pointed simplicial
sets, so $A \otimes \mathbb{S}^n$ makes sense for a simplicial model
$\mathbb{S}^n$ of the $n$-sphere. Taking $\mathbb{S}^n =
(\mathbb{S}^1)^{\wedge n}$ gives rise to natural stabilization maps 
$\pi_k(A\otimes \mathbb{S}^n) \ra \pi_{k+1}(A \otimes
\mathbb{S}^{n+1})$ and $\Gamma$-homology can be identified with the
stable homotopy groups of $A \otimes \mathbb{S}^\bullet$ \cite[Theorem
1]{PR}.

In general
all three homology theories differ drastically: if $k$ is $\F_2$ for
instance, then Andr\'e-Quillen homology of the polynomial ring
$\F_2[x]$ vanishes above degree zero whereas $\Gamma$-homology of
$\F_2[x]$ with coefficients in $\F_2$ is isomorphic to
$(H\F_2)_*H\Z$ \cite[3.2]{riro}. In contrast Harrison homology of $\F_2$
viewed as an 
$\F_2$-algebra is not trivial in positive degrees whereas Andr\'e-Quillen
homology and $\Gamma$-homology are zero (see for instance
\cite[Example 6.7]{RoWh}).  One reason for these 
differences is that the operad of commutative algebras is in general not
$\Sigma$-free unless we work in characteristic zero: the operad $\Com$
over $k$ is $\Com(n) = k$ for all $n$
and this is $k$-projective but not projective as a module over the
group algebra of the symmetric group $\Sigma_n$,
$k[\Sigma_n]$. Usually one replaces the operad $\Com$ by an
$E_\infty$-operad to make things homotopy invariant. For instance Mike
Mandell showed \cite[1.8]{M} that the normalization functor induces an
isomorphism  between Andr\'e-Quillen homology for  simplicial 
$E_\infty$-algebras and Andr\'e-Quillen homology for 
differential graded $E_\infty$-algebras. We take the
point of view that we want to keep the operad $\Com$ but work in an underlying
symmetric monoidal category with enough $\Sigma$-freeness to make up
for the algebraic defects of the operad $\Com$. 

We show
that  Harrison homology, Andr\'e-Quillen
homology and Gamma homology coincide if we consider reduced
commutative monoids in symmetric sequences. In other contexts such
algebras are called commutative shuffle algebras \cite{Ro}; shuffle
algebras play an important r\^ole in the theory of combinatorial Hopf
algebras. 
If  $A$ is such a monoid and if $A$ is levelwise projective as a $k$-module we
have
$$ AQ_*(A;k) \cong \Sigma^{-1}\harr_*(A) \cong H\Gamma_*(A)$$
(see Theorems \ref{thm:aqharr} and \ref{thm:aqhgamma}). We also show
that the Hodge 
decomposition for Hochschild homology is valid in our context in 
arbitrary characteristic (Theorem \ref{thm:hodge}). 

We establish a model category structure for pointed commutative
monoids in symmetric sequences of chain complexes with fibrations and
weak equivalences induced by the underlying category
(see Corollary \ref{cor:mcat}). This should be 
compared to commutative differential graded algebras, where such a
model structure does not exist unless one works over the rationals. 

We extend the classical Dold-Kan 
correspondence to a Quillen equivalence between the model category
structures of pointed simplicial commutative shuffle algebras and
pointed differential graded commutative shuffle algebras (Theorem
\ref{thm:doldkancommsh}). Here we call 
a monoid $A$ pointed if its zeroth level consists precisely of the
unit of the underlying category. This generalizes Quillen's result
\cite[Remark on p.~223]{Q69} in the characteristic zero setting. 

Brooke Shipley showed \cite{shipley} that there is a Quillen
equivalence between the model categories of $Hk$-algebra spectra and
differential graded $k$-algebras for any commutative ring $k$. We use
her chain of functors to show in Proposition \ref{prop:hkexamples}
that commutative $Hk$-algebra spectra 
give rise to natural examples of differential graded and simplicial commutative
shuffle algebras. In fact, we conjecture that there is a Quillen
equivalence between commutative $Hk$-algebra spectra and commutative
monoids in spectra of simplicial $k$-modules and commutative monoids
in spectra of differential graded $k$-modules. We plan to address
this question in future work. 

\textbf{Acknowledgement}: I thank Teimuraz Pirashvili for several helpful
  discussions. The idea, that all common homology theories of
  commutative algebras should agree in the setting of symmetric sequences is
  due to him. John Rognes asked whether commutative shuffle algebras
  are divided power algebras in a suitable sense and thanks to this
  question I thought about the structures that lead to Theorem
  \ref{thm:pd}. 

\section{Commutative shuffle algebras}

We will fix an arbitrary commutative ground ring $k$. If $S$ is a set, 
then $kS$ denotes the free $k$-modules generated by $S$. We denote by 
$\Sigma_n$ the symmetric group on $n$ letters and by $k\Sigma_n$ its 
group algebra.

For general facts about symmetric sequences I recommend \cite{AM,St}.
If $\mathcal{C}$ is any category, then we denote by
$\mathcal{C}\Sigma$ the category of symmetric sequences in
$\mathcal{C}$. Its objects are sequences $X=(X(\ell))_{\ell \in
  \mathbb{N}_0}$ of objects $X(\ell)$ of
$\mathcal{C}$ such that every $X(\ell)$ carries a left
$\Sigma_\ell$-action with $\Sigma_\ell$ denoting the symmetric group
on $\ell$ letters. We call $X(\ell)$ the $\ell$th level of $X$. A
morphism $f\colon X \ra Y$ between two objects $X,Y$ of
$\mathcal{C}\Sigma$ is a sequence of morphisms $f(\ell)\colon X(\ell)
\ra Y(\ell)$ in $\mathcal{C}$ such that every $f(\ell)$ is 
$\Sigma_\ell$-equivariant.

If $(\mathcal{C},\boxtimes, 1)$ is symmetric monoidal and possesses
sums that distribute over the monoidal product, \eg, if $\mathcal{C}$
is closed, then $\mathcal{C}\Sigma$ inherits a symmetric monoidal
product from $\mathcal{C}$ if $\mathcal{C}$ has coequalizers.

In the following the categories of $k$-modules, simplicial modules and
chain complexes play an important r{\^o}le. If we denote the category
of $k$-modules by $\modules$, then the usual tensor product of
$k$-modules, $\otimes$, gives rise to a symmetric monoidal category
$(\modules\Sigma,\odot,I)$ such that for two $M,N \in \modules\Sigma$
we get in level $\ell$
$$ (M \odot N)(\ell) = \bigoplus_{p+q=\ell} k\Sigma_\ell
\otimes_{k\Sigma_p \otimes k\Sigma_q} M(p) \otimes N(q).$$ Here the
right-hand side denotes the coequalizer with respect to the
$\Sigma_p\times \Sigma_q$-action on $M(p)\otimes N(q)$ and on 
$\Sigma_\ell$ by viewing $\Sigma_p\times \Sigma_q$ as a subgroup of
$\Sigma_\ell = \Sigma_{p+q}$. This structure is usually called the
tensor product of symmetric sequence of modules. We use the notation
$\odot$ in an attempt to minimize confusion. 

The unit for this symmetric monoidal structure is the symmetric
sequence $I$ with
$$ I(\ell) = 0 \text{ for } \ell \neq 0 \text{ and } I(0) =k.$$
For the symmetry we denote by $\chi(q,p)\in \Sigma_{q+p}$ the permutation with
$$ \chi(q,p)(i) = \begin{cases} i+p, \text{ for } 1 \leq i \leq q, \\
                                i-q, \text{ for } q < i \leq
                                q+p.   \end{cases}$$

The symmetry, $tw$,  for $\odot$ is then defined by sending a class
$[\sigma \otimes x \otimes y]$ with $\sigma\in \Sigma_{p+q}$, $x \in
X(p)$ and $y\in Y(q)$ to
$$ tw[\sigma \otimes x\otimes y] = [(\sigma\circ \chi(q,p)) \otimes y
\otimes x].$$

For graded modules the situation is similar but we might introduce the
sign $(-1)^{|x||y|}$ in $tw$. These symmetric monoidal structures on $\modules$
and 
$\modules\Sigma$ transfer to symmetric monoidal structures  on
non-negatively graded chain 
complexes, $\dgmod$, and to $\dgmod\Sigma$ such that for $X_*,Y_* \in
\dgmod\Sigma$ (with $*$ denoting the chain degree) we get in chain degree
$n$
$$ (X_*\odot Y_*)(\ell)_n = \bigoplus_{p+q=\ell} \bigoplus_{r+s=n}
k\Sigma_\ell \otimes_{k\Sigma_p \otimes k\Sigma_q} X_r(p) \otimes
Y_s(q).$$
Thus $$ (X_*\odot Y_*)(\ell) = \bigoplus_{p+q=\ell} k\Sigma_\ell
\otimes_{k\Sigma_p \otimes k\Sigma_q} X_*(p) \otimes Y_*(q)$$
if we follow the usual grading convention for tensor products of chain
complexes.

Similarly, for simplicial modules, $\smod$, and the corresponding
category of symmetric sequences therein, $\smod\Sigma$, we get for
$A_\bullet,B_\bullet \in \smod\Sigma$
$$ (A_\bullet\hodot B_\bullet)(\ell) = \bigoplus_{p+q=\ell}
k\Sigma_\ell \otimes_{k\Sigma_p \otimes k\Sigma_q} A_\bullet(p) \hot
B_\bullet(q)$$
where $\hot$ denotes the symmetric monoidal product for simplicial
modules, \ie, in simplicial degree $n$ this yields
$$ (A_\bullet(p) \hot B_\bullet(q))_n = A_n(p) \otimes B_n(q)$$
with a diagonal action of face and degeneracy operators.

As we will use these categories frequently in the rest of the paper we
ease notation by abbreviating $\dgmod\Sigma$ to $\sigmadg$ and $\smod\Sigma$ to
$\sigmas$.

We will make frequent use of the following two constructions. Let $M$
be a reduced object in $\modules\Sigma$, \ie, $M(0)=0$.

The free associative monoid generated by $M$ is
$$ T(M) = \bigoplus_{i\geq 0} M^{\odot i}$$
and the free commutative monoid generated by $M$ is
$$C(M) = \bigoplus_{i\geq 0} M^{\odot i}/\Sigma_i.$$
Sometimes, we need the reduced version of $C(M)$, so let $\bar{C}(M)$
be the free
commutative non-unital monoid in symmetric sequences generated by
$M$. Then $\bar{C}(M) = \bigoplus_{i\geq 1} M^{\odot
  i}/\Sigma_i$. Unravelling the definitions shows that these objects
deserve their names.

Note that elements in $C(M)$ behave like polynomials in
every level, but globally they can differ. Take for instance
as $M$ the symmetric sequence that is concentrated in level $1$ and is
equal to $k$ there. Then we can define an element $d$ in $C(M)$ by
$d(\ell) = [\mathrm{id_\ell}\otimes 1 \otimes \ldots \otimes 1] \in
M^{\odot \ell}/\Sigma_\ell(\ell)$. So $d$ is non-trivial in every
level and if we would like to assign a degree to this 'polynomial' we
could do that levelwise with the degree of $d(\ell)$ being $\ell$, but
this assignment does not give rise to any reasonable notion of global
degree. 

A monoid in $A$ in  $\modules\Sigma$ has a multiplication map
$\mu\colon A \odot A \ra A$. For a fixed level $\ell$ the map
$\mu(\ell)$ is a $\Sigma_\ell$-equivariant map 
$$ \mu(\ell) \colon \bigoplus k\Sigma_\ell \otimes_{k\Sigma_p \otimes
  k\Sigma_q} A(p) \otimes A(q) \ra A(\ell).$$ 
As the coset
$\Sigma_\ell/\Sigma_p \times \Sigma_q$ has the set of
$(p,q)$-shuffles, $\Sh(p,q)$, 
as a set of representatives, one can also define a monoid in symmetric
sequences of modules by declaring that there is a map
$\mu(\sigma)\colon A(p)\otimes A(q) \ra A(p+q)$ for every
$(p,q)$-shuffle $\sigma$ and that these maps satisfy certain coherence
conditions as spelled out in \cite[Definition 2.1]{Ro}. 
\begin{defn} \label{def:shufflealgs} 
  \begin{itemize}
\item[]
\item 
A monoid in $\modules\Sigma$ is called a \emph{shuffle algebra}. 
\item
A commutative monoid in $\modules\Sigma$ is a \emph{commutative
  shuffle algebra}. 
  \end{itemize}
\end{defn}
Before we move on to the differential graded and simplicial context we
give three examples of commutative shuffle algebras in the category of
(graded) modules. 

\begin{itemize}
\item 
Let $V$ be a $k$-module. Then we can define the symmetric sequence
generated by $V$ as 
$$ \mathrm{Sym}(V)(\ell) = V^{\otimes \ell}$$
where $\Sigma_\ell$ acts by permuting the tensor coordinates. 
Then $\mathrm{Sym}(V)$ is a commutative shuffle algebra despite the
fact that its underlying graded object is the tensor algebra generated
by $V$, \ie, the free associative algebra generated by $V$.  
\item
If $A_*$ is a graded commutative $k$-algebra, then Stover defines a
symmetric sequence $A_*^\pm$ \cite[p.~323]{St}  with
$$ A_*^\pm(\ell) = A_\ell$$ 
with the $\Sigma_\ell$-action given by the sign-action: 
$$ \sigma.a := \mathrm{sgn}(\sigma)a.$$ 
With this convention, $A_*^\pm$ is actually a commutative shuffle
algebra. Note that just placing $A_\ell$ in level $\ell$ does \emph{not}
define a commutative monoid. 
\item
If $\varepsilon\colon A \ra k$ is an augmented commutative unital
$k$-algebra, then we can define the symmetric sequence $gr^\Sigma(A)$
in $k$-modules 
by $gr^\Sigma(A)(\ell) = I^\ell/I^{\ell +1}$ where $I$ denotes the
augmentation ideal of $A$ and where $gr^{\Sigma}(A)(\ell)$ carries the trivial
$\Sigma_\ell$-action.  Then $gr^{\Sigma}$ actually defines a functor from
the category of  augmented commutative unital
$k$-algebras to commutative shuffle algebras. 
\end{itemize}

We also get interesting functors from commutative shuffle algebras to
ordinary (graded) commutative algebras. The following notion is a
variant of Stover's definition \cite[14.4]{St}. 
\begin{defn}
Let $A$ be an augmented commutative unital shuffle algebra. Then we
define $\Psi(A)$ to be the graded module with 
$$\Psi(A)_n = \begin{cases} 0, & n \text{ odd, } \\
A_m, & n=2m. \end{cases}$$
We let $\Psi(A)$ carry a symmetrized multiplication by setting 
$$a \cdot b := \sum_{\sigma \in \Sh(p,q)} \sigma \mu(\id \otimes a
\otimes b) = \sum_{\sigma \in \Sh(p,q)}\mu(\sigma \otimes a
\otimes b) $$
for homogeneous $a \in \Psi(A)_{2p} = A(p)$ and $b \in \Psi_{2q(A)}$. 
\end{defn}

In the following we use the abbreviation $\Sh(p;n)$ for the set of
shuffles $\Sh(\underbrace{p,\ldots,p}_n)$. This set carries an action
of the symmetric group on $n$ letters where an element $\sigma \in \Sigma_n$
acts via the precomposition with the corresponding block permutation
$\sigma^b \in \Sigma_{pn}$ which permutes the blocks $\{1,\ldots,p\}$,
$\{p+1,\ldots,2p\}$,$\ldots$,$\{p(n-1)+1,\ldots,pn\}$ as $\sigma$
permutes the numbers $\{1,\ldots,n\}$. We denote by $S(p;n)$ a set of
representatives for the quotient of $\Sh(p;n)$ by $\Sigma_n$. 

\begin{thm} \label{thm:pd}
For every augmented commutative shuffle algebra with $A(0)=k$, $\Psi(A)$
is a graded divided power algebra.  
\end{thm}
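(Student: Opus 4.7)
The plan is to define, for each $a \in \Psi(A)_{2p} = A(p)$ with $p \geq 1$, the divided power
\[
\gamma_n(a) := \sum_{\sigma \in S(p;n)} \mu_n(\sigma \otimes a^{\otimes n}) \in A(pn) = \Psi(A)_{2pn},
\]
where $\mu_n \colon A^{\odot n} \to A$ denotes the iterated multiplication, described via shuffles as in \cite[Definition 2.1]{Ro}. The first step is to check that this is independent of the chosen set $S(p;n)$ of representatives for $\Sh(p;n)/\Sigma_n$: since $\tau \in \Sigma_n$ acts on $\Sh(p;n)$ by precomposition with the block permutation $\tau^b$, and commutativity of $A$ yields $\mu_n(\sigma \circ \tau^b \otimes a^{\otimes n}) = \mu_n(\sigma \otimes a^{\otimes n})$, each summand only depends on the $\Sigma_n$-orbit of $\sigma$. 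The normalizations $\gamma_0(a) = 1 \in A(0) = k$ (using the augmentation) and $\gamma_1(a) = a$ are immediate, and $\gamma_n(\lambda a) = \lambda^n \gamma_n(a)$ for scalar $\lambda \in k = A(0)$ is a direct consequence of the definition.

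The crucial axiom is the product formula $\gamma_n(a)\gamma_m(a) = \binom{n+m}{n}\gamma_{n+m}(a)$. Unfolding definitions, the left-hand side equals
\[
\sum_{\sigma \in \Sh(pn,pm)} \sum_{\tau_1 \in S(p;n)} \sum_{\tau_2 \in S(p;m)} \mu_{n+m}\bigl(\sigma \circ (\tau_1 \times \tau_2) \otimes a^{\otimes (n+m)}\bigr).
\]
I would then invoke the composition bijection $\Sh(pn,pm) \times \Sh(p;n) \times \Sh(p;m) \cong \Sh(p;n+m)$, rewrite the sum as indexed by $\Sh(p;n+m)$, and collect by $\Sigma_{n+m}$-orbit using commutativity. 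The freeness of the $\Sigma_n$-action on $\Sh(p;n)$ gives $|\Sh(p;n)| = n!\,|S(p;n)|$, so each orbit in $S(p;n+m)$ is hit with multiplicity
\[
\binom{p(n+m)}{pn} \cdot |S(p;n)| \cdot |S(p;m)| \Big/ |S(p;n+m)| = \frac{(n+m)!}{n!\,m!}.
\]
The composition axiom $\gamma_n(\gamma_m(a)) = \tfrac{(nm)!}{n!(m!)^n}\gamma_{nm}(a)$ is proved by the same method, applied to the iterated decomposition $\Sh(pm;n) \times \Sh(p;m)^{\times n} \cong \Sh(p;nm)$, which after the analogous count produces the ratio $(nm)!/(n!(m!)^n)$.

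For additivity, $\gamma_n(a+b) = \sum_{i+j=n}\gamma_i(a)\gamma_j(b)$, I would expand $(a+b)^{\otimes n}$ as a sum over functions $f\colon \{1,\ldots,n\} \to \{a,b\}$, group monomials by $i = |f^{-1}(a)|$, and use commutativity of $A$ to rearrange every summand to the standard form $a^{\otimes i} \otimes b^{\otimes j}$, absorbing the resulting block permutation into the shuffle $\sigma$. The collected sum then runs over $\Sh(pi,pj) \times \Sh(p;i) \times \Sh(p;j)$ for each $(i,j)$, and collapsing the free $\Sigma_i \times \Sigma_j$-orbits reproduces $\gamma_i(a)\gamma_j(b)$. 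The main obstacle throughout is the shuffle bookkeeping: one must verify carefully that the composition bijections of shuffle sets are $\Sigma$-equivariant in the correct sense and that the $\Sigma_n$-action on $\Sh(p;n)$ is free, after which every divided power axiom reduces to a clean multinomial counting identity, with commutativity of $\mu$ doing all the algebraic work.
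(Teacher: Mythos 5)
Your construction and verification strategy coincide with the paper's: the same formula $\gamma_n(a)=\sum_{\sigma\in S(p;n)}\mu(\sigma\otimes a^{\otimes n})$, well-definedness from $\Sigma_n$-invariance of $a^{\otimes n}$, and the product, composition and addition axioms reduced to the composition bijections of shuffle sets ($\Sh(pn,pm)\times\Sh(p;n)\times\Sh(p;m)\cong\Sh(p;n+m)$, and its iterated analogue) together with orbit counting; your multiplicity computation for $\binom{n+m}{n}$ is correct.

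However, there is a gap in coverage: you never verify the multiplicativity axiom $\gamma_n(x\cdot y)=x^n\cdot\gamma_n(y)$ for a \emph{positive-degree} homogeneous element $x\in\Psi(A)_{2p}$. What you check, $\gamma_n(\lambda a)=\lambda^n\gamma_n(a)$ for $\lambda\in k=\Psi(A)_0$, is only the degree-zero instance of this axiom, and in a graded divided power algebra the relation is required for arbitrary $x$ in the algebra, not just scalars; without it the claimed structure is not established. This case is not an immediate corollary of the other axioms and needs its own shuffle bookkeeping: expanding $\gamma_n(x\cdot y)$ gives sums indexed by $S(p+q;n)$ composed with shuffles of the individual $x,y$-pairs and applied to $(x\otimes y)^{\otimes n}$, whereas $x^n\cdot\gamma_n(y)$ is indexed by $\Sh(pn,qn)$ composed with $\Sh(p;n)$ and $S(q;n)$ and applied to $x^{\otimes n}\otimes y^{\otimes n}$, so the tensor factors occur in a different order. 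The paper handles this by inserting the interleaving permutation $\chi$ that moves the $x$-factors next to the $y$-factors, i.e.\ using $\mu(\xi\circ(\rho_1\oplus\rho_2)\otimes x^{\otimes n}\otimes y^{\otimes n})=\mu(\xi\circ(\rho_1\oplus\rho_2)\circ\chi\otimes(x\otimes y)^{\otimes n})$, and then matching the two indexing sets bijectively. The same technique you use elsewhere works here, so your proof is repairable, but as written this divided power axiom is simply missing.
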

\begin{proof}

For any element $x\in \Psi(A)$ of positive degree $2p$ we define
$$\gamma_n(x) := \sum_{\sigma \in S(p;n)} \mu(\sigma \otimes
x^{\otimes n}). $$
As $x^{\otimes n}$ is invariant under the $\Sigma_n$-action, this is
well-defined and $n! \gamma_n(x) = x^n$. We have to show
that this definition gives a divided power structure on
$\Psi(A)$. 

\begin{itemize}
\item
The product $\gamma_n(x)\cdot \gamma_m(x)$ is equal to 
$$ \sum_{\sigma \in \Sh(np,mp)}\sum_{\tau_1\in S(p;n)} \sum_{\tau_2\in
S(p;m)} \mu^\sigma((\sigma\circ (\tau_1\oplus \tau_2)) \otimes
x^{\otimes (n+m)}).$$ 

On the other hand $\gamma_{n+m}(x)$ is 
$$\sum_{\xi \in S(p;n+m)}\mu^\Sigma(\xi\otimes x^{\otimes (n+m)}.$$ 
Composing elements in $\Sh(np,mp)$ with the sum of elements in
$\Sh(p;n)$ and
$\Sh(p;m)$ gives precisely the set
$\Sh(p;n+m)$. In $\gamma_n(x)\cdot \gamma_m(x)$
we devide out by the action of $\Sigma_n$ and $\Sigma_m$ whereas for
$\gamma_{n+m}(x)$ we build the quotient with respect to $\Sigma_{n+m}$
and therefore we obtain
\begin{equation} \label{eq:gammanplusm} 
\gamma_n(x) \cdot \gamma_m(x) = \binom{n+m}{n}\gamma_{n+m}(x).
\end{equation}

\item
The $n$-the divided power of a sum of two homogeneous elements of
positive degree $2p$ is 
\begin{align*} 
\gamma_{n}(x+y) = & \sum_{\sigma \in S(p;n)} \mu^\Sigma(\sigma \otimes
(x+y)^{\otimes n}) \\ 
= & \sum_{\sigma \in S(p;n)}\sum_{i=0}^n\sum_{\tau \in \Sh(i,n-i)}
\mu^\Sigma(\sigma \otimes \tau(x^{\otimes i} \otimes y^{\otimes
  (n-i)})). 
\end{align*}
For $\sum_{i=0}^n \gamma_i(x)\cdot\gamma_{n-i}(y)$ we obtain 
$$ \sum_{i=0}^n \sum_{\xi \in \Sh(ip,(n-i)p)}\sum_{\tau_1 \in
  S(p;i)}\sum_{\tau_2 \in S(p;n-i)} \mu^\Sigma((\xi\circ(\tau_1\oplus
\tau_2)) \otimes x^{\otimes i} \otimes y^{\otimes (n-i)}).$$
As 
$$ \mu^\Sigma(\sigma \otimes \tau(x^{\otimes i} \otimes y^{\otimes
  (n-i)})) = \mu^\Sigma((\sigma \circ \tau^b) \otimes x^{\otimes i}
\otimes y^{\otimes (n-i)})$$ 
we can again finish the comparison by a bijection of the indexing sets
in the two summations.  

\item
For the iteration of divided powers we express $\gamma_n(\gamma_m(x))$
as 
$$ \sum_{\sigma \in S(pm;n)} \sum_{\rho_1\in S(p;m)} \ldots
\sum_{\rho_n \in S(p;m)} \mu^\Sigma((\sigma\circ (\rho_1\oplus
\ldots\oplus \rho_n)) \otimes (x^{\otimes m})^{\otimes n}).$$ 
The indexing set is in bijection with the quotient of
$\Sh(p;mn)$ 
by $\Sigma_n$ acting by block permutations composed with $\Sigma_m
\times \ldots \times \Sigma_m$ whereas 
for $\gamma_{nm}(x)$ we get the quotient of the same set of shuffles
by the group $\Sigma_{mn}$ and therefore 
\begin{equation} \label{eq:iteration}
\gamma_n(\gamma_m(x)) = \frac{(nm)!}{n!(m!)^n} \gamma_{nm}(x).
\end{equation}

\item
If we apply $\gamma_n$ to a product of of two homogeneous elements $x
\in \Psi_{2p}(A)$ and $y\in \Psi_{2q}(A)$, then we obtain
$$ \sum_{\sigma in S(p+q;n)} \sum_{\tau_1 \in \Sh(p,q)} \sum_{\tau_2
  \in \Sh(p,q)} \mu^\Sigma((\sigma\circ (\tau_1\oplus \tau_2)) \otimes
(x\otimes y)^{\otimes n}).$$
In contrast to this we get 
$$ x^n\cdot \gamma_n(y) = \sum_{\xi\in \Sh(pn,qn)}\sum_{\rho_1\in
  \Sh(p;n)} \sum_{\rho_2 \in S(q;n)}
\mu^\Sigma(\xi \circ (\rho_1\oplus \rho_2)\otimes x^{\otimes n}
\otimes y^{\otimes n})$$
so the $x$- and $y$-terms appear in a different order. Using the
relation 
$$ \mu^\Sigma(\xi \circ (\rho_1\oplus \rho_2)\otimes x^{\otimes n}
\otimes y^{\otimes n}) = \mu^\Sigma(\xi \circ (\rho_1\oplus \rho_2)
\circ \chi \otimes (x \otimes y)^{\otimes n})$$
for the permutation $\chi$ that shuffles the $x$-terms next to the
$y$-terms we can again compare the two terms by a bijection of the
indexing sets and hence we obtain
\begin{equation} \label{eq:cartanformula}
\gamma_n(x\cdot y) = x^n \cdot \gamma_n(y)
\end{equation}
for elements in positive degrees. The multilinearity of the tensor
product also ensures that the above equation holds for $x$ in
$\Psi(A)_0 = A(0) = k$. 
\end{itemize}
\end{proof}

Note that $\Psi(\mathrm{Sym}(V))$ is the tensor algebra $T(V[2])$ on the
$k$-module $V[2]$ ($V$ concentrated in degree $2$) with the shuffle
multiplication. This also carries a divided power structure which is
made explicit in \cite[\S 5]{roby}.  

\section{Homology theories}
\subsection{Harrison homology}
Benoit Fresse introduces a version of Harrison homology
for reduced commutative monoids in the category of symmetric
sequences of $k$-modules by mimicking the classical definition, thus
it is defined as
the homology indecomposables of the bar complex \cite[6.8]{F2}: If $A$
is a reduced commutative monoid in symmetric sequences in $\modules$, then
$$ \harr_*(A) := H_*(\text{Indec}B(A)).$$

Here, working with reduced monoids corresponds to the setting where
one considers augmented monoids and takes coefficients in the ground
ring. In this sense Harrison homology as above corresponds to Harrison
homology of $k \oplus A$ with coefficients in $k$. 
In the following we assume that $M$ is a reduced symmetric sequence in
$\modules$
such that every $M(\ell)$ is $k$-projective. Fresse
proves \cite[Proposition 6.9]{F2} that Harrison homology, $\harr_*$,
of $\bar{C}(M)$ gives back the generators, if $M$ is levelwise
$k$-projective, \ie,
$$ \harr_*(\bar{C}(M)) \cong \Sigma M.$$

\subsection{Gamma homology}
Gamma homology \cite{RoWh} can be identified with $E_\infty$-homology
\cite[Theorem 9.5]{F2},
$H^{E_\infty}_*$, which in
turn can be defined as the stabilization of $E_n$-homology. If $A$ is
a non-unital commutative algebra (in $k$-modules or reduced symmetric
sequences of $k$-modules), then Fresse shows \cite{F2}
$$H^{E_\infty}_*(A) = \colim H^{E_n}_*(A) \cong \colim
H_*\Sigma^{-n}B^n(A) =: \Sigma^{-\infty}B^\infty(A).$$

We assume again that $M$ is a reduced symmetric sequence which is
degreewise $k$-projective.
Fresse calculates the homology of the bar construction
applied to $\bar{C}(M)$ \cite[Proposition 6.2]{F2} and we obtain by an
iteration of this result: 
$$H_*B^n\bar{C}(M) \cong \bar{C}(\Sigma^nM).$$
The isomorphism is induced by a map $\nabla_n$. For $\nabla_1\colon
\bar{C}\Sigma M \ra B\bar{C}M$ one considers the inclusion $M \ra
\bar{C}M$. This map
combined with the natural map from the suspension to the bar
construction yields $\Sigma M \ra B\bar{C}M$ and as $B\bar{C}M$ is
commutative, we
can extend this map to $C\Sigma M$ \cite[6.2]{F2}. Note that the case
$n=1$ is the Hochschild-Kostant-Rosenberg Theorem in disguise.

As $E_n$-homology of $\bar{C}(M)$ is isomorphic to
$H_*(\Sigma^{-n}B^n\bar{C}(M))$, an immediate consequence of \cite[6.3]{F2} is:
\begin{prop}
For every $M$ as above
$$ H^{E_n}_*(\bar{C}(M)) \cong \Sigma^{-n}\bar{C}(\Sigma^nM).$$
\end{prop}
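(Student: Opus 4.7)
The plan is to unwind the definition of $E_n$-homology and invoke the iterated bar calculation that was set up in the paragraphs immediately preceding the statement. By the definition recalled above, $H^{E_n}_*(\bar{C}(M)) = H_*(\Sigma^{-n} B^n \bar{C}(M))$, and since $\Sigma^{-n}$ is just a shift of chain degree, it commutes past $H_*$: so the task reduces to identifying $H_* B^n \bar{C}(M)$ with $\bar{C}(\Sigma^n M)$ and then applying $\Sigma^{-n}$.

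To produce the identification $H_* B^n \bar{C}(M) \cong \bar{C}(\Sigma^n M)$, I would proceed by induction on $n$, using Fresse's calculation \cite[Proposition 6.2]{F2} as the base and inductive step. For $n=1$, one uses the comparison map $\nabla_1 \colon \bar{C}(\Sigma M) \to B \bar{C}(M)$ already described in the text: start from $M \hookrightarrow \bar{C}(M)$, compose with the natural map $\Sigma M \to B \bar{C}(M)$, and use the commutative monoid structure on $B\bar{C}(M)$ to extend multiplicatively to $\bar{C}(\Sigma M)$. For the inductive step, having shown $H_* B^{n-1} \bar{C}(M) \cong \bar{C}(\Sigma^{n-1} M)$, observe that $\Sigma^{n-1} M$ is still a reduced symmetric sequence that is levelwise $k$-projective, so Fresse's result applies once more to give $H_* B \bar{C}(\Sigma^{n-1} M) \cong \bar{C}(\Sigma^n M)$, and the iterated bar computation follows by composing the maps $\nabla_1$ at each stage into $\nabla_n$.

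The main technical point hidden behind the phrase ``an immediate consequence of \cite[6.3]{F2}'' is the verification that the iteration is legitimate: one must check that after each application of the bar construction, the homology is computed by $\bar{C}$ applied to a levelwise $k$-projective reduced symmetric sequence, so that the hypotheses of Fresse's proposition are preserved; this is clear since shifting preserves levelwise projectivity. One also needs that the map $\nabla_1$ is natural enough that its iterates assemble correctly and that the spectral sequence / direct calculation underlying \cite[6.2]{F2} behaves well with respect to this iteration, but this is built into Fresse's framework.

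Combining the two steps gives $H^{E_n}_*(\bar{C}(M)) = \Sigma^{-n} H_* B^n \bar{C}(M) \cong \Sigma^{-n} \bar{C}(\Sigma^n M)$, which is the claimed isomorphism. I do not expect any genuine obstacle beyond the bookkeeping of the induction, since the proposition is essentially an unpacking of the definition of $E_n$-homology combined with the already-noted iterated bar calculation.
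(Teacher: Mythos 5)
Your proposal matches the paper's own treatment: the paper likewise unwinds $H^{E_n}_*(\bar{C}(M)) = H_*(\Sigma^{-n}B^n\bar{C}(M))$ and obtains $H_*B^n\bar{C}(M) \cong \bar{C}(\Sigma^n M)$ by iterating Fresse's Proposition 6.2 via the maps $\nabla_n$, citing \cite[6.3]{F2} for the $E_n$-homology identification. The point you flag as hidden bookkeeping (that each bar stage again lands in the levelwise projective, reduced setting so the iteration is legitimate) is exactly what the paper also delegates to Fresse, so your argument is correct and essentially identical in route.
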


With the help of this identification, we can show that Gamma-homology
of $\bar{C}(M)$ behaves like Harrison homology, up to a suspension.

\begin{thm}
For every reduced symmetric module $M$ which is levelwise
$k$-projective we have
$$ H\Gamma_*(\bar{C}(M)) = H^{E_\infty}_*(\bar{C}(M)) \cong M.$$
This isomorphism is natural in $M$.
\end{thm}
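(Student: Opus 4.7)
The plan is to pass to the colimit in the preceding proposition and observe that only the linear part of $\bar{C}$ survives stabilization. The first equality $H\Gamma_*(\bar{C}(M)) = H^{E_\infty}_*(\bar{C}(M))$ is Fresse's identification of Gamma-homology with $E_\infty$-homology cited above the proposition, so the real work is in computing $H^{E_\infty}_*(\bar{C}(M))$.

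Combining the defining colimit $H^{E_\infty}_*(A) = \colim_n H^{E_n}_*(A)$ with the preceding proposition gives
\[
H^{E_\infty}_*(\bar{C}(M)) \cong \colim_n \Sigma^{-n}\bar{C}(\Sigma^n M).
\]
I would then decompose each entry of this colimit system along the polynomial grading $\bar{C}(N) = \bigoplus_{i \geq 1} N^{\odot i}/\Sigma_i$. Since $\odot$ and the suspension commute up to the usual degree shift (with Koszul signs that for odd $n$ twist the $\Sigma_i$-action by the sign representation on $M^{\odot i}$), the $i$th summand
\[
\Sigma^{-n}(\Sigma^n M)^{\odot i}/\Sigma_i
\]
is concentrated in chain degree $n(i-1)$. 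For $i=1$ this is simply $M$, sitting in chain degree $0$ independently of $n$; for $i \geq 2$ it lives in chain degrees $\geq n$ and therefore vanishes in any fixed chain degree once $n$ is large. Hence the colimit in each chain degree is concentrated in the $i=1$ component and equals $M$.

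The step I expect to be most delicate is checking that the structure maps of this colimit system, induced by Fresse's maps $\nabla_n$, respect the polynomial decomposition and restrict to the identity on the $i=1$ summand. By construction $\nabla_n$ extends the composite $\Sigma^n M \hookrightarrow B^n \bar{C}(M)$ of the inclusion with the iterated canonical suspension-to-bar map, so on the linear piece the stabilization is tautologically the identity on $M$ after desuspension; for $i \geq 2$ the image can only land in pieces of polynomial degree $\geq i$, whose chain degree grows without bound, hence they contribute nothing in the colimit. Naturality in $M$ is then automatic from the naturality of $\nabla_n$ and of the identifications above.
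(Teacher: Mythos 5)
Your proposal is correct and follows essentially the same route as the paper: both pass to the colimit of $\Sigma^{-n}\bar{C}(\Sigma^{n}M)$ via the preceding proposition and analyze the stabilization maps through Fresse's $\nabla_n$, concluding by degree reasons that only the linear summand $M$ survives while it maps identically under stabilization. Your fixed-chain-degree vanishing argument for the summands with $i \geq 2$ is only a mild streamlining of the paper's explicit identification of the maps $\psi_n$ (identity on $M$, zero on higher monomial length), so the substance is the same.
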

\begin{proof}
We have to understand the maps in the direct system for
$H^{E_\infty}_*(\bar{C}(M))$, \ie, we have to understand the squares of the
form
$$\xymatrix{
{\Sigma^{-n}\bar{C}(\Sigma^nM)} \ar@{.>}[r]^{\psi_n} \ar[d]_{\Sigma^{-n}\nabla_n} &
{\Sigma^{-(n+1)}\bar{C}(\Sigma^{(n+1)}M)} \ar[d]^{\Sigma^{-(n+1)}\nabla_{n+1}}\\
{H_*\Sigma^{-n}B^n(\bar{C}(M))} \ar[r]^{\sigma_n} &
{H_*\Sigma^{-(n+1)}B^{n+1}(\bar{C}(M))}
}$$
and in particular, we have to determine what the effect of the maps
$\psi_n$ is. The map $\sigma_n$ is the stabilization map from
$E_n$-homology to $E_{n+1}$-homology. It is induced by the canonical map
$$ \Sigma B^n\bar{C}(M) \ra B^{n+1}\bar{C}(M).$$
Let $s_{-n}(s_nm_1\cdot\ldots\cdot s_nm_\ell)$ denote an element in
$\Sigma^{-n}\bar{C}(\Sigma^nM)$. Under $\Sigma^{-n}\nabla_n$ it is sent to
$s_{-n}[m_1]_n \cdot\ldots\cdot[m_\ell]_n$ where $\cdot$ denotes the
product in $B^n$ and $[-]_n$ denotes an $n$-fold iterated bracket in
the bar construction. Therefore
$$ \sigma_n(\Sigma^{-n}\nabla_n(s_{-n}[m_1]_n
\cdot\ldots\cdot[m_\ell]_n)) = s_{-(n+1)}[[m_1]_n
\cdot\ldots\cdot[m_\ell]_n].$$
For a product of monomial length one we obtain
$$ \sigma_n(\Sigma^{-n}\nabla_n(s_{-n}[m_1]_n) =
s_{-(n+1)}[m_1]_{n+1}$$
and this is in the image of $\Sigma^{-(n+1)\nabla_{n+1}}$ with
$$\psi_n(s_{-n}s_nm_1) = \psi_n(m_1) = s_{-(n+1)}s_{n+1}m_1 \in
\Sigma^{-(n+1)}\bar{C}(\Sigma^{(n+1)}M).$$

Elements of higher monomial length cannot be in the image of
$\Sigma^{-(n+1)\nabla_{n+1}} \circ \psi_n$ for degree
reasons. Therefore the maps in the stabilization sequence
$$ \psi_n\colon  \Sigma^{-n}\bar{C}(\Sigma^{n}M) \ra
\Sigma^{-(n+1)}\bar{C}(\Sigma^{(n+1)}M) $$
are given by identifying the summand $\Sigma^{-n}\Sigma^nM = M \subset
\Sigma^{-n}\bar{C}(\Sigma^{n}M)$ with the summand
$\Sigma^{-(n+1)}\Sigma^{(n+1)}M = M \subset
\Sigma^{-(n+1)}\bar{C}(\Sigma^{(n+1)}M)$ and by projecting all other
summands to zero. Thus $M$ is the direct limit of the stabilization
process.
\end{proof}

\subsection{Andr\'e-Quillen homology}

We can define Andr\'e-Quillen homology for reduced commutative monoids
in symmetric sequences as usual: For every such $A$ there is a
standard free simplicial resolution
$$ \xymatrix@1{{\ldots} \ar@<1ex>[r] \ar@<-1ex>[r]  \ar@<-3ex>[r]
  \ar@<3ex>[r] & {\bar{C}^3(A)} \ar[r] \ar@<2ex>[r]
  \ar@<-2ex>[r] \ar[l] \ar@<-2ex>[l] \ar@<2ex>[l]   & {\bar{C}^2(A)}
  \ar@<1ex>[r]  \ar@<-1ex>[r]
\ar@<1ex>[l] \ar@<-1ex>[l]  & {\bar{C}(A)}  \ar[l]  }$$
and we define Andr\'e-Quillen homology of $A$ (with trivial coefficients) to be
$$ \AQ_*(A) = H_*(Q_a(\bar{C}^{\bullet +1}(A))$$
where $Q_a(-)$ denotes the module of indecomposables and the homology
is taken of the corresponding chain complex. Note that the canonical
inclusion $A \ra \bar{C}(A)$ is a section to the augmentation
$\bar{C}(A) \ra A$ that codifies the commutative monoid structure on
$A$.

Definition \ref{def:mcat-simplicial} will describe a model category
structure on  
commutative monoids in symmetric sequences of simplicial modules and
one can prove then that 
any free simplicial resolution, $P_\bullet$, 
gives rise to the same homology groups, \ie, $\AQ_*(A) =
H_*(Q_a(P_\bullet))$.

\subsection{Comparison}
We obtain the following comparison result. 
\begin{thm} \label{thm:aqharr}
Let $A$ be a reduced commutative shuffle algebra such
that every $A(\ell)$ is projective as a $k$-module, then
$$ \AQ_*(A) \cong \Sigma^{-1}\harr_*(A).$$
\end{thm}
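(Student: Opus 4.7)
The plan is to deduce the isomorphism from Fresse's calculation $\harr_*(\bar{C}(M))\cong \Sigma M$ via the standard simplicial resolution of $A$ by free commutative shuffle algebras, running two spectral sequences on a bisimplicial object. This mirrors the classical comparison of Harrison and Andr\'e--Quillen homology in characteristic zero, with Stover's norm isomorphism \eqref{eq:norm} playing the role that the characteristic zero hypothesis plays there.

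First I would resolve $A$ by the cotriple resolution $P_\bullet = \bar{C}^{\bullet+1}(A)$, whose $n$-th term is the free commutative shuffle algebra $\bar{C}(M_n)$ on $M_n := \bar{C}^n(A)$. Since $A$ is levelwise $k$-projective, repeatedly applying $\odot$ and taking $\Sigma_i$-coinvariants preserves levelwise $k$-projectivity---the coinvariants split off as a summand via the norm map \eqref{eq:norm}---so every $M_n$ is again levelwise $k$-projective and Fresse's calculation applies at every stage. By construction $Q_a(\bar{C}(M_n)) = M_n$, and hence $\AQ_*(A) = H_*(M_\bullet)$.

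Next I would form the bisimplicial $k$-module (in symmetric sequences) $X_{p,n} := \mathrm{Indec}(B(P_n))_p$, where $B(-)$ is the bar construction equipped with its shuffle-product commutative algebra structure, so that $\mathrm{Indec}\circ B$ computes Harrison homology. Compute the homology of the associated total complex (equivalently, of the diagonal) by the two standard spectral sequences. Taking homology in the bar direction first and applying Fresse's result stagewise gives $\harr_*(P_n)\cong \Sigma M_n$ concentrated in bar-degree $1$; the subsequent resolution-direction differential then yields $\Sigma H_*(M_\bullet) = \Sigma\AQ_*(A)$, so this spectral sequence collapses with abutment $\Sigma\AQ_*(A)$.

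The other spectral sequence requires the key claim that for each fixed bar-degree $p$ the augmentation $X_{p,\bullet}\to \mathrm{Indec}(B(A))_p$ is a weak equivalence of simplicial $k$-modules; granting this, the $E^2$-page collapses to the row $q=0$ and abuts to $H_*(\mathrm{Indec}\,B(A)) = \harr_*(A)$. Comparing the two abutments then gives $\harr_*(A)\cong\Sigma\AQ_*(A)$, which is exactly the claimed identification. The main obstacle is this last claim: one must verify that the composite $\mathrm{Indec}\circ B$ carries the cotriple resolution $P_\bullet\to A$ to a simplicial weak equivalence in each bar-degree. This is where levelwise $k$-projectivity together with Stover's norm isomorphism \eqref{eq:norm} do the heavy lifting, ensuring that the tensor-coinvariant manipulations inside $B$ and $\mathrm{Indec}$ behave as they would over a field of characteristic zero.
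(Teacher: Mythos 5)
Your outline follows the paper's own proof almost verbatim: the same bicomplex $\text{Indec}(B(\bar{C}^{q+1}(A)))_p$, the same two filtrations, and Fresse's computation $\harr_*(\bar{C}(M))\cong \Sigma M$ applied stagewise to collapse the bar direction to $\Sigma Q_a(P_\bullet)$, hence to $\Sigma \AQ_*(A)$. The problem is the step you explicitly flag and then ``grant'': that for each fixed bar degree $p$ the augmentation $\text{Indec}(B(P_\bullet))_p \ra \text{Indec}(B(A))_p$ is a weak equivalence. You leave this unproved, and the tools you propose for it --- levelwise $k$-projectivity and Stover's norm isomorphism --- are not what makes it true; no projectivity or norm argument enters this step at all. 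The correct (and standard) argument, which is what the paper invokes via the section $s\colon A \ra P_0=\bar{C}(A)$, is an extra-degeneracy argument: the augmented simplicial object $P_\bullet \ra A$ admits a simplicial contraction in the underlying category of symmetric sequences, given by the unit of the free--forgetful adjunction (these extra degeneracies are \emph{not} algebra maps, which is exactly why one needs the further observation that, for fixed $p$, the functor $A \mapsto \text{Indec}(B(A))_p$ --- the $p$-th tensor power of the suspension modulo the images of the shuffle maps --- depends only on the underlying symmetric sequence of $A$, the multiplication entering only through the bar differential, i.e.\ the $p$-direction). Applying this functor degreewise therefore transports the contraction, the augmentation becomes a simplicial homotopy equivalence, and the second spectral sequence collapses onto $\harr_*(A)$. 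Without this observation your ``main obstacle'' remains open, and the norm map points in the wrong direction for closing it.

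A secondary, minor point: your claim that levelwise projectivity propagates to $M_n=\bar{C}^n(A)$ is correct in conclusion, and it is a hypothesis genuinely needed to apply Fresse's proposition at every simplicial stage (the paper leaves this implicit), but the mechanism is not the norm isomorphism by itself --- an isomorphism between coinvariants and invariants does not split the coinvariants off $M^{\odot i}$ over a general $k$. The real reason is that for a \emph{reduced} symmetric sequence the block action of $\Sigma_i$ permutes the induced summands of $M^{\odot i}(\ell)$ freely, so the coinvariants are levelwise a direct sum of tensor products of the $M(p_j)$, hence levelwise projective; this freeness is also what underlies Stover's isomorphism.
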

\begin{proof}
Let $P_\bullet$ be the free simplicial resolution in the category of reduced
commutative monoids in symmetric sequences with
$P_t=\bar{C}^{t+1}(A)$. We consider the bicomplex with
$\text{Indec}(B(P_t))_s$ in bidegree $(s,t)$.

Taking homology in $s$-direction and using Fresse's result gives
$$ H_s(\text{Indec}(B(P_t))_* = \harr_s(P_t) = \begin{cases}
0, & s>0, \\
\Sigma Q_a(P_t), & s=0,
\end{cases}$$
and therefore
$$H_tH_s(\text{Indec}(B(P_t))_* \cong H_t\Sigma Q_a(P_\bullet) \cong
\AQ_{t-1}(A).$$

On the other hand, the section $s \colon A \ra P_0$ ensures that
$H_t\text{Indec}(B(P_\bullet))$ reduces to $\text{Indec}(B(A))$ in
degree $t=0$. Thus the total complex of our bicomplex calculates
the Harrison homology groups of $A$ and hence we obtain
$$ \AQ_{r}(A) \cong \harr_{r+1}(A) \text{ for all } r \geq 0.$$
\end{proof}

In a similar manner we can show that Andr\'e-Quillen homology is
isomorphic to $\Gamma$-homology:

\begin{thm} \label{thm:aqhgamma}
Let $A$ be a reduced commutative shuffle algebra such
that every $A(\ell)$ is projective as a $k$-module, then
$$ \AQ_*(A) \cong H^{E_\infty}_*(A).$$
\end{thm}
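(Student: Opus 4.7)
The plan is to mimic the proof of Theorem \ref{thm:aqharr}, but with Harrison homology replaced by $H^{E_\infty}_*$ and with Fresse's vanishing result $\harr_*(\bar{C}(M)) \cong \Sigma M$ replaced by the immediately preceding theorem $H^{E_\infty}_*(\bar{C}(M)) \cong M$. The crucial point is that both Harrison and $E_\infty$-homology agree with the indecomposables functor $Q_a$ on a free commutative shuffle algebra, the only difference being the suspension shift that appears in the Harrison case but not here.

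First I would take the standard free simplicial resolution $P_\bullet$ with $P_t = \bar{C}^{t+1}(A)$, together with the section $A \ra P_0 = \bar{C}(A)$, and fix a functorial chain complex model $C^{E_\infty}_*(-)$ computing $H^{E_\infty}_*$ (for concreteness one may use the telescope of the $\Sigma^{-n} B^n$ construction used in the previous theorem). Applying $C^{E_\infty}_*$ levelwise to $P_\bullet$ gives a bicomplex whose $(s,t)$-entry is $C^{E_\infty}_s(P_t)_*$ and whose two spectral sequences we will compare.

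Taking homology in the $s$-direction first: since $P_t = \bar{C}(\bar{C}^t(A))$ with $\bar{C}^t(A)$ levelwise $k$-projective, the previous theorem gives
\begin{equation*}
H^{E_\infty}_s(P_t) \cong \begin{cases} \bar{C}^t(A), & s=0,\\ 0, & s>0. \end{cases}
\end{equation*}
Hence the $E^2$-page collapses onto the row $s=0$, and the remaining $t$-differential is precisely the alternating sum of face maps on $Q_a(P_\bullet) = \bar{C}^\bullet(A)$. By definition this computes $\AQ_t(A)$. In the other order, the section $A \ra P_0$ supplies extra degeneracies for the augmented simplicial object $P_\bullet \to A$, and standard homotopical invariance of the iterated bar construction applied to a resolution with extra degeneracies identifies the $t$-direction homology levelwise with $C^{E_\infty}_s(A)_*$ concentrated in $t=0$; taking $s$-homology then yields $H^{E_\infty}_*(A)$. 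Comparing the two gives $\AQ_*(A) \cong H^{E_\infty}_*(A)$.

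The main obstacle is the homotopy-invariance statement in the second spectral sequence, because $H^{E_\infty}_*$ is defined as a colimit of homologies of iterated bar constructions rather than as a derived functor directly on commutative shuffle algebras. Concretely one has to know that each $B^n(-)$ preserves the weak equivalence $|P_\bullet| \simeq A$, and that the colimit defining $H^{E_\infty}_*$ commutes with the $t$-direction homology under the levelwise projectivity hypothesis on $A$. The bar construction commutes with geometric realization of simplicial commutative shuffle algebras in the usual way, and the projectivity assumption on each $A(\ell)$ (which propagates to each $\bar{C}^t(A)(\ell)$) makes the colimit over $n$ exact, so this step goes through once the relevant model-categorical preliminaries (to be developed later in the paper) are available; no genuinely new input is needed beyond what drives the analogous step in Theorem \ref{thm:aqharr}.
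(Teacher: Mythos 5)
Your proposal is correct and follows essentially the same route as the paper: the bicomplex $C^{E_\infty}_s(\bar{C}^{t+1}(A))$, collapsing one direction via the computation $H^{E_\infty}_*(\bar{C}(M))\cong M$ to recover $Q_a(\bar{C}^{\bullet+1}(A))$ and hence $\AQ_*(A)$, and collapsing the other via the extra degeneracies coming from the section $A\ra\bar{C}(A)$ to get $H^{E_\infty}_*(A)$. The homotopy-invariance worry you raise is not needed once a functorial chain model $C^{E_\infty}_*(-)$ is applied levelwise, since the extra degeneracies already contract the $t$-direction.
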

\begin{proof}
In this case we consider the bicomplex which is
$C^{E_\infty}_p(\bar{C}^{q+1}(A))$ in bidegree $(p,q)$. Its total
complex computes $E_\infty$-homology of $A$, but taking homology in
$p$-direction first yields
$$ H^{E_\infty}_p(\bar{C}^{q+1}(A)) \cong \bar{C}^{q}(A) \cong
Q_a(\bar{C}^{q+1}(A))$$
and thus if we then take homology in $q$-direction we obtain
Andr\'e-Quillen homology of $A$.
\end{proof}

Therefore, all three homology theories coincide for reduced
commutative monoids in symmetric sequences:
$$ \Sigma^{-1}\harr_*(A) \cong \AQ_*(A) \cong H^{E_\infty}_*(A).$$

\subsection{Hodge decomposition}
Let $k$ be any field and assume that $A$ is a non-unital
reduced commutative shuffle algebra. For any symmetric sequence $M$ we
denote by $\bar{C}_qM$ the $q$th homogeneous part of $\bar{C}M$, \ie,
$\bar{C}_qM = M^{\odot q}/\Sigma_q$. 
\begin{defn} For $q>0$ the $p$th $q$-Andr\'e-Quillen homology group of $A$ is
  defined as 
$$\AQ_{p}^{(q)}(A) := H_p\Sigma^{-1}\bar{C}_q\Sigma Q_a(\bar{C}^{\bullet+1}(A)).$$
\end{defn}
This definition should be compared to the definition of
$q$-Andr\'e-Quillen homology of an augmented commutative algebra with
coefficients in the ground ring in the usual setting as the homology of
$\Lambda^q Q_a(C^{\bullet+1}(A))$ (\cite[\S 3.5]{L},\cite[\S
6]{Qm}). Note that $\AQ_{p}^{(1)}(A) \cong \AQ_{p}(A)$. 

\begin{thm} \label{thm:hodge}
For every reduced commutative shuffle $k$-algebra $A$  we have 
$$ H^{E_1}_n(A) \cong \bigoplus_{p+q=n}\AQ_{p}^{(q)}(A).$$  
\end{thm}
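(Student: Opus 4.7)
The plan is to adapt the bicomplex argument from the proofs of Theorems~\ref{thm:aqharr} and \ref{thm:aqhgamma}, but now keeping track of an additional weight grading. Set $P_\bullet := \bar C^{\bullet+1}(A)\to A$, the standard free simplicial resolution, and form the bicomplex
\[
D_{p,q} := C^{E_1}_p(P_q).
\]
Since $P_\bullet\to A$ is a simplicial resolution by free commutative shuffle algebras, the total complex of $D_{*,*}$ computes $H^{E_1}_*(A)$.

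I would first take homology in the $p$-direction. Writing $P_q = \bar C(N_q)$ with $N_q := \bar C^q(A)=Q_a(P_q)$, the identification used in Section~3.2 (following Fresse) gives a canonical weight decomposition
\[
H^{E_1}_*(P_q) \;\cong\; \Sigma^{-1}\bar C(\Sigma N_q) \;=\; \bigoplus_{w\geq 1}\Sigma^{-1}\bar C_w(\Sigma N_q).
\]
The technical crux is to lift this levelwise decomposition to a decomposition of the simplicial object $H^{E_1}_*(P_\bullet)$. The face and degeneracy maps of the monadic resolution split into two families: those induced from monad multiplications at inner positions or from the structure map $\bar C(A)\to A$ are of the form $\bar C(f)$ for symmetric-sequence maps $f$, and preserve weight by the naturality of Fresse's isomorphism in the generators. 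The one remaining face map uses the outermost multiplication $\mu\colon \bar C\bar C\to\bar C$ and fails to preserve polynomial degree on the chain level; after passing to $E_1$-homology, however, its components from weight $\geq 2$ land in decomposables of $\bar C(N_{q-1})$, which vanish in $E_1$-homology of a free commutative algebra, so it too respects weight on $H^{E_1}_*$.

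Granting this compatibility, the weight-$w$ summand of the simplicial object $H^{E_1}_*(P_\bullet)$ is $\Sigma^{-1}\bar C_w(\Sigma Q_a(P_\bullet))$, whose $q$-th simplicial homology is by definition the relevant contribution to $AQ^{(w)}_*(A)$. Assembling contributions over all weights $w\geq 1$ and reindexing by total bidegree $n$ produces
\[
H^{E_1}_n(A)\;\cong\;\bigoplus_{p+q=n}AQ_p^{(q)}(A),
\]
as desired.

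The main obstacle is the weight-compatibility statement for the outer-multiplication face map: rigorously showing that although $\mu$ shifts polynomial degree on algebras, the map it induces on $E_1$-homology preserves the Fresse weight grading. The rest of the argument is essentially a rerun of the bicomplex/spectral sequence bookkeeping already used in Theorem~\ref{thm:aqhgamma}, together with the definition of $AQ^{(q)}_p$.
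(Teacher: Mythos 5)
There is a genuine gap. What you compute by taking homology of the rows $C^{E_1}_*(P_q)$ and then homology in the simplicial direction is the $E^2$-term of the spectral sequence of the bicomplex, namely $E^2_{p,q}\cong \AQ^{(q)}_p(A)$; it is \emph{not} yet the homology of the total complex, which is what computes $H^{E_1}_*(A)$. Your final step (``assembling contributions over all weights and reindexing by total bidegree'') silently assumes that this spectral sequence degenerates at $E^2$ and that the resulting filtration of $H^{E_1}_n(A)$ splits. Neither is automatic: the higher differentials $d^r$ shift the bar degree, hence mix the weight pieces, and both source and target are in general nonzero, so there is no degree argument forcing them to vanish. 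This degeneration is exactly the second half of the paper's proof: one maps the bicomplex $\Sigma^{-1}\bar{C}_p\Sigma\bar{C}^{q}A$ (which carries only vertical differentials, so its spectral sequence collapses trivially) into $\Sigma^{-1}B_p\bar{C}^{q+1}A$ via $\nabla$; this map is an isomorphism on horizontal homology, hence on $E^2$-terms and on total homology, which forces $E^2=E^\infty$ for the bar-resolution bicomplex. Finally, the standing hypothesis that $k$ is a field (stated at the start of the Hodge subsection, and nowhere used in your argument) is what removes the extension problems when passing from $E^\infty$ to the direct sum decomposition. Without a degeneration argument of this kind and the field hypothesis, your conclusion does not follow.

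Your first-half analysis (identifying the rows' $E_1$-homology via Fresse and checking that the simplicial face maps respect the weight decomposition) is in the right spirit and corresponds to what the paper gets from naturality of $\nabla$, but the justification for the outer face map is misstated: decomposables do \emph{not} vanish in $E_1$-homology of a free algebra, since $H^{E_1}_*(\bar{C}M)\cong\Sigma^{-1}\bar{C}(\Sigma M)$ is again a free commutative algebra, so its decomposable part is nontrivial. The correct statement is that the suspension map $\Sigma\bar{C}M\ra B\bar{C}M$ kills decomposables in homology (a product $[xy]$ is a boundary of $[x|y]$), so the generator part of weight $\geq 2$ maps to zero and the induced algebra map preserves weight. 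Even with this repaired, the argument only pins down $E^2$, and the collapse step above remains the missing core of the proof.
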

\begin{proof}
We consider the bicomplex in symmetric sequences given by 
$\Sigma^{-1}B_p(\bar{C}^{q+1}A)$ in bidegree $(p,q)$. Taking vertical
homology first yields $\Sigma^{-1}B_p(A)$ and taking the $p$th horizontal
homology afterwards gives $H^{E_1}_p(A)$. Thus the spectral sequence
converges to the $E_1$-homology groups of $A$. 

If we take horizontal homology first we obtain
$$ H^{E_1}_q(\bar{C}^{p+1}A) = H_q(\Sigma^{-1}B\bar{C}(\bar{C}^p(A))) \cong
(\Sigma^{-1}\bar{C}(\Sigma \bar{C}^pA))_q \cong \Sigma^{-1} \bar{C}_q\Sigma 
Q_a(\bar{C}^{p+1}A).$$ 
Thus vertical homology of this yields $\AQ^{(q)}_{p}(A)$. Therefore
we obtain a spectral sequence 
$$ E^2_{p,q} = \AQ^{(q)}_{p}(A) \Rightarrow H^{E_1}_{p+q}(A).$$ 
We have to show that there are no higher differentials in this spectral
sequence. To this end we consider the map of bicomplexes induced by
$\nabla$:
$$\Sigma^{-1} \bar{C}_p\Sigma  Q_a(\bar{C}^{q+1}A) \cong
\Sigma^{-1}\bar{C}_p\Sigma \bar{C}^qA   
\ra  \Sigma^{-1}B_p\bar{C}^{q+1}A.$$ 
The left hand side carries only non-trivial vertical
differentials. The horizontal homology groups  
$$ \Sigma^{-1}\bar{C} \Sigma \bar{C}^qA \cong H_*(\Sigma^{-1}B\bar{C}^{q+1}A)$$ 
are isomorphic and we also get an isomorphism on the associated
$E^2$-terms. Thus we know that the homology groups of the associated 
total complexes agree and hence the $E^2$-term is already
isomorphic to the $E^\infty$-term. As we are working over a field,
there are no extension issues. 
\end{proof}
\section{Symmetric sequences in the dg and simplicial context}
From now on $k$ is an arbitrary commutative unital ring. 
For a chain complex $X_*$ and an integer $r \geq 0$ we denote by
$F^r(X_*)$ the symmetric sequence
$$F^r(X_*)(n) = \begin{cases}0, & n \neq r,\\ k[\Sigma_r] \otimes X_*,
  & n=r. \end{cases}$$
As usual we denote by $\mathbb{D}^n$ the $n$-disc complex, \ie, the
chain complex which
has $k$ in degrees $n$ and $n-1$ and whose only non-trivial
differential is the identity map; $\mathbb{S}^n$ is the $n$-sphere
complex which has $k$ as the only non-trivial entry in chain degree
$n$.

A standard result \cite[Theorem 11.6.1, 11.5]{H} turns the category of
symmetric sequences of non-negatively graded chain complexes,
 into a cofibrantly generated model category:
\begin{prop}
The category \sigmadg is a cofibrantly generated model category. Its
generating cofibrations are given by
$$I_\Sigma = \{F^r(\mathbb{S}^{n-1})  \ra F^r(\mathbb{D}^n), n \geq 1,
r\geq 0\}$$
and its generating acyclic cofibrations are
$$ J_\Sigma= \{F^r(0)=0 \ra F^r(\mathbb{D}^n), n \geq 1, r\geq 0\}. $$
A map $f\colon X_* \ra Y_*$ in $\sigmadg$ is a weak equivalence
(fibration) if its evaluations $f(n) \colon X_*(n) \ra Y_*(n)$ are
fibrations (weak equivalences)  in the model
category structure of
non-negatively graded chain complexes for all $n \geq 0$.
\end{prop}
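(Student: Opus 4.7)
My approach is to apply Kan's transfer recognition theorem for cofibrantly generated model structures along a free-forgetful adjunction between $\sigmadg$ and a product of copies of $\nonnegch$. For each $r\geq 0$ the functor $F^r$ is left adjoint to the composition of evaluation at level $r$ with the forgetful functor $k[\Sigma_r]\text{-mod}\to\modules$, since
\[ \hom_{\sigmadg}(F^r(X_*),Y_*) \cong \hom_{\nonnegch}(X_*,Y_*(r)). \]
Assembling these identifications over all $r$ yields an adjunction
\[ F\colon {\textstyle\prod_{r\geq 0}}\nonnegch \rightleftarrows \sigmadg \colon U \]
where $U$ forgets the $\Sigma_r$-actions level-by-level. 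The left-hand category carries the product of the standard projective model structures on $\nonnegch$, whose generating (acyclic) cofibrations are the maps $\mathbb{S}^{n-1}\to\mathbb{D}^n$ (respectively $0\to\mathbb{D}^n$) placed in a single factor $r$; pushing these forward by $F$ gives precisely the sets $I_\Sigma$ and $J_\Sigma$.

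The plan is then to verify the two standard hypotheses of the transfer theorem. Smallness holds automatically since each domain and codomain in $I_\Sigma\cup J_\Sigma$ is a bounded complex with a free $k$-module of countable rank concentrated in a single level, and since colimits in $\sigmadg$ are formed level-wise. The substantive point is acyclicity of relative $J_\Sigma$-cell complexes: because $F^r(0)=0$, the pushout of $F^r(0)\to F^r(\mathbb{D}^n)$ along any map is simply the summand inclusion $X_*\to X_*\oplus F^r(\mathbb{D}^n)$, and this is level-wise the inclusion of a direct summand of an acyclic complex, hence a trivial cofibration of $\nonnegch$ at every level. Transfinite compositions of such maps remain level-wise trivial cofibrations, which settles the acyclicity hypothesis.

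The transfer theorem then furnishes a cofibrantly generated model structure on $\sigmadg$, and the adjunction formula directly identifies the right lifting property against $J_\Sigma$ (resp.\ $I_\Sigma$) with being level-wise a fibration (resp.\ trivial fibration); combining this with the acyclicity of relative $J_\Sigma$-cell complexes forces the weak equivalences to be exactly the level-wise quasi-isomorphisms, matching the description in the statement. The only non-formal step in the argument is the acyclicity verification, but this reduces immediately to the classical fact in $\nonnegch$ thanks to the level-wise formation of colimits and the concentration of each $F^r(\mathbb{D}^n)$ in a single level.
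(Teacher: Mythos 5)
Your proposal is correct, and in substance it is the paper's own argument: the paper simply invokes Hirschhorn's theorem giving the projective (levelwise) model structure on diagram categories --- here diagrams over the groupoid $\coprod_{r\geq 0}\Sigma_r$ with values in $\nonnegch$ --- and the proof of that theorem is exactly the Kan transfer along the free--forgetful adjunction with $\prod_{r\geq 0}\nonnegch$ that you carry out. Your smallness and acyclicity checks (the latter using that the domains in $J_\Sigma$ are $0$, so attaching cells gives levelwise summand inclusions with acyclic complement $F^r(\mathbb{D}^n)$) are sound, so the write-up stands as a self-contained verification of the cited result.
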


Similarly, as the model category of simplicial $k$-modules is
cofibrantly generated with generating cofibrations and generating
acyclic cofibrations induced from the ones in the model structure of
simplicial sets, we can transfer the model structure to
symmetric sequences of simplicial $k$-modules and obtain the following
structure.

\begin{prop}
There is a cofibrantly generated model category structure on the
category of symmetric sequences in simplicial $k$-modules, $s\Sigma$,
such that a 
map $f\colon A_\bullet \ra B_\bullet$ is a weak equivalence
(fibration) if and only if $f(n)\colon A_\bullet(n) \ra B_\bullet(n)$
is a weak equivalence (fibration) for all $n \geq 0$.
\end{prop}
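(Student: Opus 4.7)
The plan is to mirror the proof of the preceding proposition for $\sigmadg$: I would transfer the cofibrantly generated model structure from $\smod$ levelwise via the free-and-evaluate adjunctions
$$F^r \colon \smod \rightleftarrows \sigmas \colon \mathrm{ev}_r, \qquad F^r(A_\bullet)(n) = \begin{cases} k[\Sigma_r] \otimes A_\bullet, & n=r, \\ 0, & n \neq r. \end{cases}$$
Here the standard cofibrantly generated model structure on $\smod$ has generating (acyclic) cofibrations $k[\partial\Delta^n \hookrightarrow \Delta^n]$ and $k[\Lambda^n_i \hookrightarrow \Delta^n]$ obtained by applying the free $k$-module functor to the generating (acyclic) cofibrations of simplicial sets.

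First, I would propose
\begin{align*}
I^s_\Sigma &= \{F^r(k[\partial\Delta^n]) \ra F^r(k[\Delta^n]) : n \geq 0,\ r \geq 0\}, \\
J^s_\Sigma &= \{F^r(k[\Lambda^n_i]) \ra F^r(k[\Delta^n]) : n \geq 1,\ 0 \leq i \leq n,\ r \geq 0\}
\end{align*}
as generating cofibrations and generating acyclic cofibrations, and invoke Kan's recognition theorem (\cite[Theorem 11.3.1]{H}). By the adjunctions $F^r \dashv \mathrm{ev}_r$ together with the characterization of (acyclic) fibrations in $\smod$, a map $f$ in $\sigmas$ has the right lifting property against $I^s_\Sigma$ (respectively $J^s_\Sigma$) if and only if each $f(r)$ is an acyclic fibration (respectively fibration) in $\smod$; this is precisely the class demanded of the prospective acyclic fibrations and fibrations.

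Next I would verify the remaining recognition hypotheses. Smallness of the domains holds since $F^r$ preserves filtered colimits and $k[\partial\Delta^n]$, $k[\Lambda^n_i]$ are small (indeed finitely presented) in $\smod$. The class of levelwise weak equivalences satisfies the two-out-of-three axiom and is closed under retracts, both properties being inherited level-by-level from $\smod$. Finally, every relative $J^s_\Sigma$-cell complex is levelwise obtained by pushouts along maps of the form $k[\Sigma_r] \otimes (k[\Lambda^n_i] \hookrightarrow k[\Delta^n])$; these are trivial cofibrations in $\smod$ because tensoring with the free module $k[\Sigma_r]$ preserves them, and trivial cofibrations are closed under pushouts and transfinite compositions, producing the required levelwise weak equivalences.

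The only mildly subtle point to keep an eye on is the interaction with the $\Sigma_r$-actions: since fibrations and weak equivalences in $\sigmas$ are tested on the underlying simplicial $k$-modules without reference to the group actions, every lifting problem translates via the adjunction into a non-equivariant problem in $\smod$, and the freeness of the $\Sigma_r$-actions produced by $F^r$ prevents any equivariance obstruction from arising. Once these checks are in place, Kan's theorem yields the model structure, and the characterization of fibrations and weak equivalences claimed in the proposition is built into the choice of $I^s_\Sigma$ and $J^s_\Sigma$.
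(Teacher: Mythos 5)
Your argument is correct and takes essentially the same route as the paper, which obtains the structure by the standard levelwise transfer of the cofibrantly generated structure on $\smod$ (invoking the same machinery, Hirschhorn's theorem, as for the preceding proposition on $\sigmadg$); you simply verify the recognition-theorem hypotheses by hand, with the generators $F^r$ applied to the generating (acyclic) cofibrations of $\smod$ and fibrations/acyclic fibrations identified levelwise via the adjunction $F^r \dashv \mathrm{ev}_r$. One small correction: smallness of the domains $F^r(k[\partial\Delta^n])$, $F^r(k[\Lambda^n_i])$ should be deduced from the adjunction together with the fact that $\mathrm{ev}_r$ preserves filtered colimits (colimits in $\sigmas$ being computed levelwise), not from $F^r$ preserving filtered colimits.
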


The Dold-Kan correspondence establishes an equivalence of categories
between non-negatively graded differential graded objects in an
abelian category $\mathcal{A}$ and simplicial objects in
$\mathcal{A}$. The category $\sigmas$ is the
category of simplicial objects in $\sigmamod$ and $\sigmadg$ is the category of
non-negatively graded chain complexes in $\sigmamod$. Hence we can
apply the Dold-Kan theorem:
\begin{prop}
The normalization functor induces an equivalence of categories
$$ N \colon \sigmas \ra \sigmadg.$$
\end{prop}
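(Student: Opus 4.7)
The plan is to reduce the proposition to the classical Dold-Kan theorem applied to the abelian category $\modules\Sigma$ itself. The paper has already arranged the bookkeeping: $\sigmas$ has been identified with simplicial objects in $\sigmamod$ and $\sigmadg$ with non-negatively graded chain complexes in $\sigmamod$, and the general Dold-Kan theorem for abelian categories has been quoted immediately before the statement. Only two things remain: verify that $\modules\Sigma$ is abelian, and check that the resulting normalization functor is the one meant in the proposition.

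First I would verify that $\sigmamod = \modules\Sigma$ is abelian. This is immediate once one observes that $\modules\Sigma$ is equivalent to the product category $\prod_{n\geq 0} k[\Sigma_n]\text{-}\modules$, each factor being the abelian category of left modules over the group algebra $k[\Sigma_n]$. Morphisms, kernels, cokernels, images and finite (co)products in $\modules\Sigma$ are all computed levelwise, and $\Sigma_n$-equivariance is preserved automatically under these constructions since the kernel and cokernel of a $k[\Sigma_n]$-linear map are naturally $k[\Sigma_n]$-modules.

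With this in hand, the classical Dold-Kan theorem asserts that for any abelian category $\mathcal{A}$ the normalization functor
$$ N \colon s\mathcal{A} \lra \nonnegch(\mathcal{A}) $$
is an equivalence of categories. Applying this with $\mathcal{A} = \modules\Sigma$ yields the claim. Because the normalized chain complex is constructed out of kernels and images of face operators, and because these are computed levelwise in $\modules\Sigma$, the resulting functor $N$ on $\sigmas$ is precisely the one that applies the classical simplicial-to-chain normalization to each level $A_\bullet(n)$ while transporting the $\Sigma_n$-action unchanged; the inverse functor likewise operates levelwise.

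The argument carries no substantial obstacle: its content is entirely organizational. The one step that deserves attention is the abelianness of $\modules\Sigma$, which in turn reduces to the observation that morphisms in $\sigmamod$ are prescribed by $\Sigma_n$-equivariant maps on each level and that $\Sigma_n$-equivariance is stable under the formation of kernels and cokernels of $k$-linear maps. Once this point is made, everything else is a direct citation of Dold-Kan.
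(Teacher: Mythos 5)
Your argument is correct and is essentially the paper's own: the paper likewise observes that $\sigmas$ and $\sigmadg$ are simplicial objects and non-negatively graded chain complexes in the abelian category $\sigmamod$ and then invokes the classical Dold--Kan theorem, noting afterwards that $N$ (and hence $\Gamma$) is computed levelwise. Your explicit check that $\modules\Sigma \simeq \prod_{n\geq 0} k[\Sigma_n]\text{-}\modules$ is abelian is a minor elaboration the paper leaves implicit, not a different route.
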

If we denote by $\Gamma$ the inverse of $N$, then the pair
$(N,\Gamma)$ gives rise to a Quillen equivalence between the model
categories $\sigmas$ and $\sigmadg$. As the abelian structure in
$\sigmamod$ is formed levelwise, we obtain
that for an $A_\bullet$ in $\sigmas$ the normalization of $A_\bullet$ is given by
$$ (N(A_\bullet))(n) = N(A_\bullet(n)).$$
Consequently we also get that $\Gamma$ is formed levelwise and
therefore if we start with a chain complex $X_*$ then
\begin{equation} \label{eq:gammafr}
\Gamma(F^rX_*) = F^r\Gamma(X_*)
\end{equation}
where $F^r$ is the functor that assigns to a simplicial $k$-module
$A_\bullet$ the symmetric sequence in simplicial modules with
$$(F^rA_\bullet)(n) = \begin{cases}0, & n \neq r,\\ k[\Sigma_r] \otimes A_\bullet,
  & n=r. \end{cases}$$
Note that we can express $k[\Sigma_r] \otimes A_\bullet$ also as
$$ \underline{k[\Sigma_r]} \hot A_\bullet $$
where $ \underline{k[\Sigma_r]}$ denotes the constant simplicial
object with value $k[\Sigma_r]$ in every simplicial degree and $\hot$
is the tensor product of simplicial modules.
\section{A model structure on reduced commutative monoids in
 symmetric sequences }

In characteristic zero there is a nice model structure on the category
of differential graded commutative algebras where the fibrations and weak
equivalences are determined by the forgetful functor to
(non-negatively graded) chain complexes. If the characteristic of the
field $k$ fails to be zero or if we want to work relative to a
commutative ground ring $k$, then such a model structure does not
exist. Stanley constructed a model structure with different features
in \cite{Stanley}.

One crucial problem is that the free commutative
algebra generated an acyclic complex doesn't have to be acyclic. For
instance for the $n$-disc complex, $\mathbb{D}^n$, for even $n$ the free
commutative algebra on $\mathbb{D}^n$ is $k[x_n] \otimes
\Lambda(x_{n-1})$. The differential is determined by
$\partial(x_n)=x_{n-1}$ and the derivation property. If we consider
$\partial(x_n^2)$ then this gives $2x_{n-1}x_n$, and if we can't
divide by $2$ in $k$, then this phenomenon and similar ones in higher
degrees cause non-trivial homology. We will see that such problems
disappear when one works with symmetric sequences. 

\begin{defn}
\begin{enumerate}
\item[]
\item
We denote the category of unital commutative monoids in symmetric sequences 
of non-negatively graded chain complexes by $C_\sigmadg$ and the 
one of unital commutative monoids in symmetric sequences of simplicial
modules by  $C_\sigmas$. 
\item
We call an object $A \in C_\sigmas$ (or $C_\sigmadg$) \emph{pointed},
if $A(0) =k$. The full  
subcategory consisting of these objects is denoted by $C^+_\sigmas$
(or $C^+_\sigmadg$).  
\item
The category of \emph{reduced commutative monoids in symmetric
  sequences} in $\sigmadg$ or $\sigmas$  
consists of commutative non-unital monoids $A$ with $A(0)=0$. We
denote by $C^-_\sigmadg$ and $C^-_\sigmas$ the corresponding
categories.  
\end{enumerate}
\end{defn}

Reduced differential graded commutative shuffle algebras avoid this
problem. Before we discuss a suitable model category structure we give
some examples of such algebras. 

\begin{itemize}
\item 
Let $C_*$ be a non-negatively graded chain complex of $k$-modules. The
symmetric sequence  
$$ \mathrm{Sym}(C_*)(\ell) = C_*^{\otimes \ell}$$
is a differential graded commutative shuffle algebra and we can reduce
it by setting its zero level to zero. 
\item
Let  $\varepsilon\colon A_* \ra k$ be an augmented differential graded
commutative unital $k$-algebra with augmentation ideal $I_*$. We
define a symmetric sequence $gr^\Sigma(A_*)$ as 
$$gr^\Sigma(A_*)(\ell) = I_*^\ell/I_*^{\ell +1}$$ 
but now we let $\Sigma_\ell$-act on $gr^\Sigma(A_*)(\ell)$ by the
signum-action. Then $gr^\Sigma(A_*)$ is a differential
graded commutative shuffle algebra and we can define a reduced version
by setting its zero level to zero. In fact $gr^\Sigma$ defines a functor from 
the category of  augmented differential graded commutative unital
$k$-algebras to  differential graded commutative shuffle algebras. 
\end{itemize}

We consider the free commutative monoid functor from $\sigmadg$ to the
category of commutative monoids in $\sigmadg$, $C_\sigmadg$,
$$ C\colon \sigmadg \ra C_\sigmadg, \qquad V_* \mapsto C(V_*) =
\bigoplus_{\ell\geq 0} V_*^{\odot \ell}/\Sigma_\ell.$$
There is a canonical map from the unit $I$ to $C(V_*)$ for any $V_*$
given by the inclusion into the summand for $\ell =0$. Note that for
any $k$-module $V$ we can identify $\mathrm{Sym}(V)$ with
$C(F^1V)$. 

A crucial auxiliary result is the following.
\begin{lem} \label{lem:acyclic}
Let $X_*$ be $\bigodot_{i\in S}\bigodot_{j\in T} C(F^{r_i}\mathbb{D}^{n_j})$ where
$S$ and $T$ are some arbitrary indexing sets and $S \ni r_i\neq 0
\neq n_j\in T$ for all $i,j$. Then the canonical
map
$$ I \ra  H_*(X_*)$$
is an isomorphism of symmetric sequences in graded $k$-modules.
\end{lem}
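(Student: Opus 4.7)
The plan is to reassemble the iterated product into a single free commutative monoid and then compute symmetric powers orbit by orbit, exploiting that $k[\Sigma_N]$ is free over the group algebra of any subgroup. Since $C\colon\sigmadg\to C_\sigmadg$ is left adjoint to the forgetful functor, it sends coproducts of symmetric sequences to $\odot$-products of commutative monoids, so $X_*\cong C(M)$ with $M:=\bigoplus_{i\in S,\,j\in T} F^{r_i}\mathbb{D}^{n_j}$. Because every $r_i\neq 0$, $M$ is reduced ($M(0)=0$) and each $M(\ell)$ is a free $k[\Sigma_\ell]$-module in each chain degree; because every $\mathbb{D}^{n_j}$ is acyclic, so is $M$ levelwise. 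Writing $C(M)=I\oplus\bar{C}(M)$ with $\bar{C}(M)=\bigoplus_{\ell\geq 1}M^{\odot\ell}/\Sigma_\ell$, the canonical map $I\to C(M)$ is the inclusion of the unit summand, so it suffices to show that $M^{\odot\ell}/\Sigma_\ell$ is acyclic for every $\ell\geq 1$.

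Distributing $\odot$ over $\oplus$ and indexing the summands of $M$ as $M_\alpha=F^{r_\alpha}\mathbb{D}^{n_\alpha}$ gives
$$M^{\odot\ell}/\Sigma_\ell \;=\; \bigoplus_{[\alpha]}\bigl(M_{\alpha_1}\odot\cdots\odot M_{\alpha_\ell}\bigr)/H_{[\alpha]},$$
where $[\alpha]$ ranges over $\Sigma_\ell$-orbits of tuples $(\alpha_1,\ldots,\alpha_\ell)$ and $H_{[\alpha]}\leq\Sigma_\ell$ is a stabilizer. For a fixed tuple with $N:=\sum_s r_{\alpha_s}$, unwinding the definition of $\odot$ and using that $F^r\mathbb{D}^n(r)=k[\Sigma_r]\otimes\mathbb{D}^n$ is $k[\Sigma_r]$-free yields
$$(M_{\alpha_1}\odot\cdots\odot M_{\alpha_\ell})(N) \;\cong\; k[\Sigma_N]\otimes\bigl(\mathbb{D}^{n_{\alpha_1}}\otimes\cdots\otimes\mathbb{D}^{n_{\alpha_\ell}}\bigr),$$
with $H_{[\alpha]}$ acting on the tensor factor by permuting coordinates (well-defined since $H_{[\alpha]}$ stabilizes the tuple) and on $k[\Sigma_N]$ by right multiplication through its block-permutation image $H_{[\alpha]}^b\subset\Sigma_N$. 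The orbit summand is therefore $k[\Sigma_N]\otimes_{k[H_{[\alpha]}^b]}\bigl(\mathbb{D}^{n_{\alpha_1}}\otimes\cdots\otimes\mathbb{D}^{n_{\alpha_\ell}}\bigr)$. Since $k[\Sigma_N]$ is free as a right $k[H_{[\alpha]}^b]$-module, this is, forgetting the ambient $\Sigma_N$-structure, a finite direct sum of copies of $\mathbb{D}^{n_{\alpha_1}}\otimes\cdots\otimes\mathbb{D}^{n_{\alpha_\ell}}$, which is acyclic by iterated K\"unneth (each $\mathbb{D}^{n_j}$ is an acyclic complex of free $k$-modules). Summing over orbits and over $\ell\geq 1$ gives acyclicity of $\bar C(M)$, as required.

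The main technical point is to verify that the $H_{[\alpha]}$-action on the $k[\Sigma_N]$-factor really does factor through a genuine subgroup $H_{[\alpha]}^b\subset\Sigma_N$, i.e.\ that the block-permutation map $H_{[\alpha]}\to\Sigma_N$ is injective. This is exactly where the hypothesis $r_i\neq 0$ is used --- a nontrivial block permutation is trivial in $\Sigma_N$ only when some blocks are empty --- and it is precisely what lets the argument proceed in arbitrary characteristic without any averaging or division by orders of groups, a concrete manifestation of the $\Sigma$-freeness philosophy motivating the paper.
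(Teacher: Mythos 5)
Your argument is correct and rests on the same two pillars as the paper's proof: the identification $\bigodot_{i,j} C(F^{r_i}\mathbb{D}^{n_j})\cong C\bigl(\bigoplus_{i,j} F^{r_i}\mathbb{D}^{n_j}\bigr)$ via left-adjointness of $C$, and the observation that quotienting by symmetric groups costs nothing because they act freely through the $k[\Sigma_N]$-factor when all $r_i\geq 1$. The difference is organizational rather than conceptual: the paper first treats a single factor, computing $C(F^{r}\mathbb{D}^{n})(ar)\cong (k[\Sigma_{ar}]\otimes(\mathbb{D}^{n})^{\otimes a})/\Sigma_a\cong\bigoplus_{\Sigma_{ar}/\Sigma_a}(\mathbb{D}^{n})^{\otimes a}$, and then disposes of general products by an induction over finite $\odot$-products followed by a colimit argument; you instead compute $\bar{C}(M)$ for the whole sum $M$ in one stroke, decomposing $M^{\odot\ell}/\Sigma_\ell$ over $\Sigma_\ell$-orbits of index tuples and identifying each orbit summand as $k[\Sigma_N]\otimes_{k[H^b_{[\alpha]}]}\bigl(\mathbb{D}^{n_{\alpha_1}}\otimes\cdots\otimes\mathbb{D}^{n_{\alpha_\ell}}\bigr)$, a finite direct sum of contractible complexes. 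Your version buys two small things: it makes explicit the step the paper compresses into ``an induction shows'' (which implicitly needs that $\odot$-products of these acyclic monoids remain acyclic), and it pinpoints exactly where $r_i\neq 0$ enters, namely the injectivity of the block-permutation homomorphism $H_{[\alpha]}\to\Sigma_N$ --- the same phenomenon the paper's remark about the excluded case $r=0$ is gesturing at. (Minor quibble: the permutation of the $\mathbb{D}$-factors carries Koszul signs, so the $H_{[\alpha]}$-action on the tensor factor is sign-twisted; this does not affect your freeness argument or the conclusion, but is worth a word.)
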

\begin{rem}
Note that the case $r=0$ is excluded: If $Y_*$ is a chain complex,
then $F^0Y_*$ is a symmetric
sequence concentrated in level zero and $C(F^0Y_*)$ is also
concentrated in level zero where it is the free graded commutative
algebra generated by $Y_*$, so in that case we can't expect the result
to hold.
\end{rem}
\begin{proof}
By definition $F^{r}\mathbb{D}^{n}$ is concentrated in level $r$ with
value $k\Sigma_r \otimes \mathbb{D}^{n}$ and  this implies
$$ (F^r\mathbb{D}^{n})^{\odot a}(m) \cong  \begin{cases}
k[\Sigma_{ar}] \otimes (\mathbb{D}^n)^{\otimes a}, & m=ar, \\
0, & \text{ otherwise. }
\end{cases}$$
Hence  $C(F^{r}\mathbb{D}^{n})$ is only non-trivial in levels
of the form $\ell = ar$  and
$$C(F^r(\mathbb{D}^n))(ar) \cong (k[\Sigma_{ar}] \otimes (\mathbb{D}^n)^{\otimes
  a})/\Sigma_a. $$
But as chain complexes
$$ (k[\Sigma_{ar}] \otimes (\mathbb{D}^n)^{\otimes a})/\Sigma_a \cong
\bigoplus_{\Sigma_{ar}/\Sigma_a} (\mathbb{D}^n)^{\otimes a}$$
with $\Sigma_{ar}/\Sigma_a$ denoting the coset of the subgroup
$\Sigma_a$ of $\Sigma_{ar}$ where $\Sigma_a$ permutes the $r$-blocks
of numbers of cardinatily $a$ in $\{1,\ldots,ar\}$. Therefore the
homology of $C(F^{r}\mathbb{D}^{n})$ is trivial for all levels $\ell >
0$ and the only contribution to homology
arises from $I \subset C(F^ r(X_*))$ in level zero.

Note that
$$\bigodot_{i\in S}\bigodot_{j\in T}
C(F^{r_i}\mathbb{D}^{n_j}) \cong C(\bigoplus_{i\in S}\bigoplus_{j\in
  T}  F^{r_i}\mathbb{D}^{n_j})$$
because $\odot$ is the categorical sum in $C_\sigmadg$ and $C(-)$ is
left adjoint to the forgetful functor from commutative monoids to
$\modules\Sigma$.
An induction shows that finite $\odot$-products of factors of the form
$C(F^{r}\mathbb{D}^{n})$ have homology isomorphic to $I$ and as
$\bigodot_{i\in S}\bigodot_{j\in T} C(F^{r_i}\mathbb{D}^{n_j})$ is a
colimit over finite $\odot$-products we get the claim.
\end{proof}

Recall that $C^+_\sigmadg$ denotes the full subcategory of $C_\sigmadg$
consisting of commutative monoids $A$ in $C_\sigmadg$ with $A(0)
=\mathbb{S}^0$. Note that the objects $C(F^r(X_*))$ are in  $C^+_\sigmadg$ for
all $r > 0$. We denote by $\sigmadgred$ the full subcategory of $\sigmadg$
consisting of reduced objects. Let $C^-_\sigmadg$ be the category of
commutative non-unital monoids $B$ in $(\sigmadg, \odot)$ with $B(0)
=0$. We obtain  $C^+_\sigmadg$ from  $C^-_\sigmadg$ by adding the unit
$\mathbb{S}^0$, in fact there is an equivalence of categories between
$C^+_\sigmadg$ 
and $C^-_\sigmadg$
$$\xymatrix@1{
{C^-_\sigmadg} \ar@<0.5ex>[r]^{(\phantom{A})_+} & {C^+_\sigmadg}
\ar@<0.5ex>[l]^{\bar{(\phantom{A} )}}
}$$
where for any $A \in C^+_\sigmadg$ the non-unital algebra $\bar{A}$
consists of the augmentation ideal of $A$ and where $B_+ = \mathbb{S}^0 \oplus
B$.

Categorical sums are straightforward. In the category $C^+_\sigmadg$
the sum of two objects is given by their $\odot$-product. For reduced
commutative monoids we obtain an induced structure.
\begin{lem}
Let $B_1,B_2$ be two objects in $C^-_\sigmadg$. Then their categorical sum
is given by the symmetric sequence $B_1 \diamond B_2$ with
$$ (B_1 \diamond B_2)(\ell) = B_1(\ell) \oplus B_2(\ell) \oplus
\bigoplus_{p+q=\ell,  p,q\geq 1} k[\Sigma_ \ell] \otimes_{k[\Sigma_p \times \Sigma_q]}
B_1(p) \otimes B_2(q). $$
\end{lem}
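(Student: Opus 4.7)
The plan is to reduce to the unital setting via the equivalence $(-)_+ \colon C^-_\sigmadg \to C^+_\sigmadg$ with inverse $\bar{(\phantom{A})}$ recalled immediately before the statement, and to exploit the standard fact that coproducts of commutative monoids in a symmetric monoidal category are computed by the monoidal product. Since an equivalence of categories preserves colimits, once the coproduct of $(B_1)_+$ and $(B_2)_+$ in $C^+_\sigmadg$ is identified, passing back through $\bar{(\phantom{A})}$ will hand us the coproduct in $C^-_\sigmadg$.

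First I would verify that for $A_1, A_2 \in C_\sigmadg$ the object $A_1 \odot A_2$ represents their coproduct: the structure maps $A_i \to A_1 \odot A_2$ come from the units of the opposite factor, and given $f_i \colon A_i \to R$ in $C_\sigmadg$, the composite $\mu_R \circ (f_1 \odot f_2)$ is a monoid map because $R$ is commutative. Since $(A_1 \odot A_2)(0) = A_1(0) \otimes A_2(0) = k$ whenever both $A_i(0) = k$, the same formula yields the coproduct inside the full subcategory $C^+_\sigmadg$.

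Next I would compute $(B_1)_+ \odot (B_2)_+$ levelwise from the defining formula for $\odot$. The pairs $(p, q)$ with $p + q = \ell$ split into four cases: $(0, 0)$, contributing $k$ only in level $0$; $(0, \ell)$ with $\ell \geq 1$, contributing $B_2(\ell)$; $(\ell, 0)$ with $\ell \geq 1$, contributing $B_1(\ell)$; and $p, q \geq 1$, contributing $k[\Sigma_\ell] \otimes_{k[\Sigma_p \times \Sigma_q]} B_1(p) \otimes B_2(q)$. Passing to the augmentation ideal then removes precisely the $k$ in level $0$ and leaves the three remaining summand types as the formula for $B_1 \diamond B_2$ displayed in the statement.

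All the steps are routine; the main point requiring care is the bookkeeping of the $(p, q)$-decomposition at level zero, where the unit summands of $(B_1)_+$ and $(B_2)_+$ conspire to produce a $k$ that the augmentation-ideal functor must remove. The verification that $\odot$ is the categorical sum in $C_\sigmadg$ is standard in the theory of commutative monoids in symmetric monoidal categories, but I would include it explicitly to keep the argument self-contained.
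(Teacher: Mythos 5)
Your proposal is correct and follows essentially the same route as the paper: identify $B_1 \diamond B_2$ with the augmentation ideal of $((B_1)_+) \odot ((B_2)_+)$, use that the augmentation-ideal functor is part of an equivalence of categories (hence preserves sums) and that $\odot$ is the categorical sum of commutative monoids, then read off the levelwise formula. The extra detail you supply (verifying the coproduct property of $\odot$ and the $(p,q)$-bookkeeping at level zero) is exactly what the paper leaves implicit.
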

\begin{proof}
Note that $B_1 \diamond B_2$ is isomorphic to the augmentation ideal of
$((B_1)_+) \odot ((B_2)_+)$. As the augmentation ideal functor is part of an
equivalence of categories, it preserves sums. This proves the
universal property and also determines the multiplication on $B_1
\diamond B_2$ as the one that is inherited from  $((B_1)_+) \odot ((B_2)_+)$.
\end{proof}
The reduced sequence $0$ which consists of the zero module in every
level is a unit for $\diamond$ (compare \cite[p.~267]{AM}).
\begin{rem} \label{rem:acyclic}
Note that the proof of Lemma \ref{lem:acyclic} also gives that $
\Diamond_{i\in S}\Diamond_{j\in T} C(F^{r_i}\mathbb{D}^{n_j})$ has trivial homology.
\end{rem}
\begin{thm}
The category $C^-_\sigmadg$ has a Quillen model category structure 
such that a morphism is a  fibration (weak equivalence), if its
underlying map in $\sigmadg$ is a fibration (weak equivalence).
\end{thm}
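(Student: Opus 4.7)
My plan is to transfer the cofibrantly generated model structure from $\sigmadg$ across the free/forgetful adjunction
$$\bar{C}\colon \sigmadgred \rightleftarrows C^-_\sigmadg \colon U,$$
where $\sigmadgred$ is the full subcategory of reduced symmetric sequences in $\sigmadg$ (inheriting its own cofibrantly generated model structure by restricting the generators to level index $r\geq 1$). Kan's lifting theorem (see for instance \cite[Thm.~11.3.2]{H}) will apply once I verify (i) smallness of the domains of the transferred generating sets and (ii) that every relative cell complex built from transferred generating acyclic cofibrations is a weak equivalence in $\sigmadg$.

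I would take as generating (acyclic) cofibrations the $\bar{C}$-images of $I_+ = \{F^r(\mathbb{S}^{n-1}) \to F^r(\mathbb{D}^n) : n,r \geq 1\}$ and $J_+ = \{0 \to F^r(\mathbb{D}^n) : n,r \geq 1\}$. Smallness is standard: each $\bar{C}(F^r(\mathbb{S}^{n-1}))$ is concentrated in a bounded range of levels with bounded complexes of finitely generated free $k$-modules per level, so each such domain is small with respect to the class of $\bar{C}(J_+)$-cell complexes.

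The decisive step is (ii). A single pushout of a generator in $\bar{C}(J_+)$ along any map out of $0$ takes the form $A \to A \diamond \bar{C}(F^r(\mathbb{D}^n))$, whose level-$\ell$ cokernel is
$$\bar{C}(F^r(\mathbb{D}^n))(\ell) \oplus \bigoplus_{p+q=\ell,\ p,q\geq 1} k[\Sigma_\ell] \otimes_{k[\Sigma_p \times \Sigma_q]} A(p) \otimes \bar{C}(F^r(\mathbb{D}^n))(q).$$
By the computation in the proof of Lemma \ref{lem:acyclic}, each $\bar{C}(F^r(\mathbb{D}^n))(q)$ is a direct sum of copies of $(\mathbb{D}^n)^{\otimes a}$ (for $q=ar$), and $(\mathbb{D}^n)^{\otimes a}$ is contractible as a complex of $k$-modules since $\mathbb{D}^n$ already is. Tensoring with an arbitrary $A(p)$ and inducing up along $k[\Sigma_p \times \Sigma_q] \hookrightarrow k[\Sigma_\ell]$ preserves contractibility, so the cokernel is levelwise contractible and the map is a weak equivalence.

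For a general relative $\bar{C}(J_+)$-cell complex one iterates: the transfinite colimit is an inclusion $A \to A \diamond \Diamond_i \bar{C}(F^{r_i}(\mathbb{D}^{n_i}))$, and the same contractibility argument combined with Remark \ref{rem:acyclic} (which controls $\Diamond$-products of such factors) shows that every stage remains a weak equivalence; filtered colimits in $\sigmadg$ are computed levelwise and commute with homology, so the composite is still one. The principal obstacle is managing the formula for $\diamond$ through the coset construction and making sure contractibility genuinely survives the coinduction $k[\Sigma_\ell] \otimes_{k[\Sigma_p \times \Sigma_q]}(-)$; this is exactly where working in the reduced category matters, since the constraint $p,q \geq 1$ excludes the level-zero summands that would otherwise reintroduce the non-acyclic free graded commutative algebras flagged in the remark after Lemma \ref{lem:acyclic}. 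Once (i) and (ii) are verified, Kan's theorem yields a cofibrantly generated model structure on $C^-_\sigmadg$ whose fibrations and weak equivalences are precisely those detected by $U$.
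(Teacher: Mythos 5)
Your proposal is correct and takes essentially the same route as the paper: the paper checks Hovey's recognition criterion directly for the free generating sets $I_-=\{\bar{C}(F^r\mathbb{S}^{n-1})\to \bar{C}(F^r\mathbb{D}^n)\}$ and $J_-=\{0\to \bar{C}(F^r\mathbb{D}^n)\}$, identifies $I_-$-inj and $J_-$-inj by the same adjunction you invoke, and settles the decisive acyclicity of relative $J_-$-cell complexes via Lemma \ref{lem:acyclic} and Remark \ref{rem:acyclic}, exactly as you do. Your explicit cokernel/contractibility analysis of a single pushout $A\to A\diamond \bar{C}(F^r\mathbb{D}^n)$ merely spells out a step the paper leaves implicit.
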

\begin{proof}

Let $\bar{C}(X_*)$ be the reduced free commutative monoid generated by
$X_* \in \sigmadgred$,
$$ \bar{C}(X_*) = \bigoplus_{\ell > 0} (X_*)^{\odot
  \ell}/\Sigma_\ell. $$

We consider the following two sets.
$$ I_- := \{\bar{C}(F^r(\mathbb{S}^{n-1})) \ra
\bar{C}(F^r(\mathbb{D}^{n})); r, n \geq 1\}, $$
$$ J_- := \{0 = \bar{C}(F^r(0)) \ra
\bar{C}(F^r(\mathbb{D}^{n})); r, n \geq 1\}.  $$
We show that $C^-_\sigmadg$ is a cofibrantly generated model category
with generating cofibrations $I_-$ and generating acyclic cofibrations
$J_-$. The weak equivalences are the maps inducing quasi-isomorphisms
in each level. We use Hovey's criterion \cite[2.1.19]{Ho}.

The domains of our generators are small and the weak equivalences
satisfy $2$-out-of-$3$ and closure under retracts. We have to
understand maps with the right lifting property (RLP) with respect to $I_-$
and $J_-$, $I_-$-inj and $J_-$-inj.

A diagram like
$$ \xymatrix{
{\bar{C}(F^r(\mathbb{S}^{n-1}))} \ar[r] \ar[d] & {X} \ar[d]^f \\
{\bar{C}(F^r(\mathbb{D}^{n}))} \ar[r] & {Y}
}$$
is adjoint to the diagram
$$ \xymatrix{
{\mathbb{S}^{n-1}} \ar[r] \ar[d] & {UX(r)} \ar[d]^{Uf(r)} \\
{\mathbb{D}^{n}} \ar[r] & {UY(r)}
}$$
in the category of chain complexes. Here, $UX$ denotes the underlying
object in $\sigmadg$ of $X$. Thus the RLP is equivalent to $Uf(r)$
being an acyclic fibration in the category of chain complexes for all
$r\geq 1$.

Analogously we get that the RLP with respect to $J_-$ is equivalent to
$Uf(r)$ being a fibration of chain complexes for all $r \geq
1$. Therefore we obtain that $I_-$-inj equals the intersection of
$J_-$-inj with the class of weak equivalences.

It remains to show that $J_-$-cells are weak
equivalences and $I_-$-cofibrations. Remark \ref{rem:acyclic} ensures
that each building block of a $J_-$-cell object is acyclic and so are
directed limits of sums, thus we get the acyclicity.  By
definition $I_-$-cofibrations are the maps with the left
lifting property (LLP) with respect to the maps that have the RLP with
respect to $I_-$. Hence we are looking for maps with the LLP with
respect to maps $f$ with $Uf(r)$ being an acyclic fibration in chain
complexes for all $r \geq 1$. These are maps $g$ such that $Ug(r)$ is
a cofibration of chain complexes for all $r \geq 1$. The maps in $J_-$
satisfy this property. We showed that this property
is preserved by pushouts and (transfinite) composition preserves this
property as well.
\end{proof}

\begin{cor} \label{cor:mcat}
The category $C^+_\sigmadg$ possesses a model category structure.
\end{cor}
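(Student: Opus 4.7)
The plan is to transport the model structure just established on $C^-_\sigmadg$ across the equivalence of categories
$$\xymatrix@1{{C^-_\sigmadg} \ar@<0.5ex>[r]^{(\phantom{A})_+} & {C^+_\sigmadg} \ar@<0.5ex>[l]^{\bar{(\phantom{A} )}}}$$
recalled earlier, where $\bar{A}$ denotes the augmentation ideal of $A\in C^+_\sigmadg$ and $B_+ = \mathbb{S}^0 \oplus B$ for $B \in C^-_\sigmadg$. Concretely, I would declare a morphism $f\colon A\to B$ in $C^+_\sigmadg$ to be a weak equivalence (respectively fibration, cofibration) if and only if $\bar f\colon \bar A\to \bar B$ is one in $C^-_\sigmadg$. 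Since $A(0)=B(0)=k$ and $\bar A$ and $A$ agree on every level $\ell\geq 1$, this is the same as asking that $Uf(r)$ be a weak equivalence (respectively fibration) of chain complexes for every $r\geq 1$. The generating cofibrations and generating acyclic cofibrations in $C^+_\sigmadg$ can then be taken to be the images of $I_-$ and $J_-$ under $(-)_+$.

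Next I would verify that the model category axioms transfer. The key point is that an adjoint equivalence of categories preserves and reflects limits, colimits, commutative squares, retracts, and lifting properties, and preserves smallness of objects up to isomorphism, so that the small object argument remains applicable. Consequently the 2-out-of-3 property and closure of weak equivalences under retracts carry over from $C^-_\sigmadg$; a functorial factorization of a morphism $f$ in $C^+_\sigmadg$ is produced by factorizing $\bar f$ in $C^-_\sigmadg$ and applying $(-)_+$; and each lifting problem in $C^+_\sigmadg$ corresponds under $\bar{(\phantom{A})}$ to a lifting problem in $C^-_\sigmadg$ with the same solvability, so the lifting axioms transfer as well.

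There is no real obstacle here beyond bookkeeping: the statement is essentially the formal principle that a model structure on one side of an equivalence of categories induces one on the other. The only subtlety worth noting is that $(-)_+$ and $\bar{(\phantom{A})}$ form an adjoint equivalence rather than an isomorphism, so one must compose with the relevant unit and counit natural isomorphisms when transporting factorizations, lifts, and generating sets between the two sides.
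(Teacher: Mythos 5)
Your proposal is correct and is essentially the paper's own argument: the paper also transports the model structure from $C^-_\sigmadg$ along the equivalence given by $\bar{(\phantom{A})}$ and $(\phantom{A})_+$, declaring a map of pointed monoids to be a weak equivalence, fibration or cofibration exactly when the corresponding reduced map is one. Your additional remarks on generating (acyclic) cofibrations match the paper's subsequent description of $I_+$ and $J_+$.
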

\begin{proof}
A morphism $f$ in $C^+_\sigmadg$ is a weak equivalence, fibration or
cofibration if and only if $I \oplus f$ is one.
\end{proof}
Note that the model structure on $C^+_\sigmadg$ is also cofibrantly
generated with
$$ I_+ := \{C(F^r(\mathbb{S}^{n-1})) \ra
C(F^r(\mathbb{D}^{n})); r, n \geq 1\}$$
as generating cofibrations and
$$ J_+ := \{I = C(F^r(0)) \ra
C(F^r(\mathbb{D}^{n})); r, n \geq 1\}$$
as generating acyclic cofibrations.

\section{A Dold-Kan correspondence for commutative shuffle
algebras}

It is a folklore result, that the model categories of reduced simplicial
commutative algebras over $\Q$, $\scom$,  and of reduced differential graded
commutative algebras over $\Q$, $\dcom$, are Quillen equivalent. A
proof is given in \cite[p.223]{Q69}. We adapt Quillen's argument to
the setting of commutative shuffle algebras and show that the
implementation of the symmetric groups into the monoidal structure
allows us to drop the characteristic zero assumption.

We discussed the model category structure on $\comsigmadg$ before. On
$\comsigmas$ we take Quillen's model structure on simplicial objects
in a nice category. As usual, free maps are crucial:
\begin{defn} \label{def:mcat-simplicial}
\begin{itemize}
\item[]
\item
A morphism $f\colon A_\bullet \ra B_\bullet$ in $\comsigmas$ is
\emph{free}, if there is a subsymmetric sequence in sets $Z_q$ of
$B_q$ such that $B_q = A_q \odot C(kZ_q)$ and $f_q \colon A_q \ra B_q$
is the inclusion of $A_q$ into this sum. In addition, the $Z_q$ are
closed under degeneracies in $B_\bullet$, \ie, $s_i(Z_q) \subset
Z_{q+1}$ for all degeneracies $s_i$ of $B_\bullet$ and for all $q$.
\item
An object $A_\bullet$ in $\comsigmas$ is \emph{free}, if the map from
the initial object $\underline{k}$ to $A_\bullet$ is free, \ie, if
$A_q \cong C(kZ_q)$ for all $q$ with $Z_q \subset A_q$ as above.
\end{itemize}
\end{defn}
Let $W$ denote the forgetful functor $W \colon \comsigmas \ra \sigmas$.
\begin{defn} (compare \cite[2.9]{Qm})
A morphism $f\colon A_\bullet \ra B_\bullet$ in $\comsigmas$ is
\begin{itemize}
\item
a weak equivalence, if $W(f)$ is a weak equivalence in $\sigmas$.
\item
a cofibration, if $f$ is a retract of a free map and
\item
a fibration if it has the right lifting property with respect to
acyclic cofibrations.
\end{itemize}
\end{defn}
With these definitions, $\comsigmas$ is a model category.

The normalization functor $N\colon \sigmas \ra \sigmadg$
passes to a functor  $N\colon
\comsigmas \ra \comsigmadg$:
\begin{lem} \label{lem:Nlaxsm}
The functor $N\colon \sigmas \ra \sigmadg$ is lax symmetric monoidal.
\end{lem}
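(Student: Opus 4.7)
The plan is to transport the classical Eilenberg--Mac Lane shuffle map from the category of simplicial modules to symmetric sequences of simplicial modules. On $\smod$ the normalization functor carries a well-known lax symmetric monoidal structure, namely the shuffle map $\nabla \colon N(A_\bullet) \otimes N(B_\bullet) \to N(A_\bullet \hot B_\bullet)$, which is natural, unital, associative, and symmetric (with the usual Koszul signs). The key additional property I would use is that $\nabla$ is $\Sigma_p \times \Sigma_q$-equivariant whenever $A_\bullet(p)$ carries a $\Sigma_p$-action and $B_\bullet(q)$ carries a $\Sigma_q$-action, since it is natural in each slot separately.

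For fixed $A_\bullet, B_\bullet \in \sigmas$ and each level $\ell$, I would assemble the structure map $\bigl(N(A_\bullet)\odot N(B_\bullet)\bigr)(\ell) \to N(A_\bullet \hodot B_\bullet)(\ell)$ summand by summand. Since $N$ is left adjoint to $\Gamma$ and in particular commutes with direct sums and with tensoring by a constant simplicial module such as $k\Sigma_\ell$, one identifies
$$N(A_\bullet \hodot B_\bullet)(\ell) \cong \bigoplus_{p+q=\ell} k\Sigma_\ell \otimes_{k\Sigma_p \otimes k\Sigma_q} N\bigl(A_\bullet(p) \hot B_\bullet(q)\bigr),$$
while by definition
$$\bigl(N(A_\bullet)\odot N(B_\bullet)\bigr)(\ell) = \bigoplus_{p+q=\ell} k\Sigma_\ell \otimes_{k\Sigma_p \otimes k\Sigma_q} N(A_\bullet(p)) \otimes N(B_\bullet(q)).$$
The candidate structure map is then $\bigoplus_{p+q=\ell} \id \otimes \nabla$. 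The $\Sigma_p\times\Sigma_q$-equivariance of $\nabla$ guarantees that this map descends through the coequalizer defining $k\Sigma_\ell\otimes_{k\Sigma_p\otimes k\Sigma_q}(-)$, and the resulting map is manifestly $\Sigma_\ell$-equivariant, hence a morphism in \sigmadg. Naturality in $A_\bullet$ and $B_\bullet$ is inherited from that of $\nabla$.

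Finally, the three coherence axioms would be verified level-wise. The unit coherence is immediate because $N$ sends the constant simplicial module $k$ concentrated in level $0$ to the chain complex $k$ concentrated in chain degree $0$ and in symmetric-sequence level $0$, so $N(I)=I$. Associativity reduces on the summand indexed by $p+q+r=\ell$ to the associativity of the classical $\nabla$ after tensoring with $k\Sigma_\ell$ over $k\Sigma_p\otimes k\Sigma_q\otimes k\Sigma_r$. The step that requires the most care is compatibility with the symmetry: the twist on $\hodot$ sends $[\sigma\otimes x \otimes y]$ to $[(\sigma\circ\chi(q,p))\otimes y \otimes x]$, and analogously for $\odot$, while the classical shuffle $\nabla$ satisfies $\nabla\circ tw = tw\circ\nabla$ up to the Koszul sign because $(p,q)$-shuffles and $(q,p)$-shuffles correspond via right multiplication by $\chi(q,p)$. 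Combining these two sources of the permutation $\chi(q,p)$ shows that the assembled structure maps intertwine the symmetries on $\hodot$ and $\odot$.

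The main obstacle will be the symmetry check, since it requires tracking the interaction between the block-permutation $\chi(q,p)$ that appears in both symmetries on symmetric sequences and the internal symmetry of the Eilenberg--Mac Lane shuffle on the simplicial tensor factors; once this bookkeeping is done on each summand the coherences assemble formally.
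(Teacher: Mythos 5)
Your proposal is correct and follows essentially the same route as the paper: the structure map is assembled levelwise as $\id\otimes\mathrm{sh}$ on the summands $k\Sigma_\ell\otimes_{k\Sigma_p\otimes k\Sigma_q}N(A_\bullet(p))\otimes N(B_\bullet(q))$, using that $N$ commutes with sums and with tensoring by constant simplicial modules, and the compatibility with the twist comes from the lax symmetric monoidality of the Eilenberg--Mac Lane shuffle map combined with the block permutation $\chi(q,p)$. Your explicit remarks on equivariance (descent through the coequalizer) and on the unit and associativity coherences fill in details the paper leaves implicit, but they do not change the argument.
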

\begin{proof}
Let $A_\bullet, B_\bullet$ be any two objects in $\sigmas$. We have to
show that the diagram  
$$ \xymatrix{
{N(A_\bullet) \odot N(B_\bullet)} \ar[r]^{s_{A,B}} \ar[d]^{tw} &
{N(A_\bullet \hodot B_\bullet)} \ar[d]^{N(tw)} \\
{N(B_\bullet) \odot N(A_\bullet)} \ar[r]^{s_{B,A}} & {N(B_\bullet \hodot A_\bullet)}
}$$
commutes for a suitable binatural map $s$. Here, $tw$ denotes the
corresponding symmetry isomorphism.
For a fixed level $\ell$ we define $s$ as the composite
$$ \xymatrix{{\bigoplus_{p+q=\ell} k\Sigma_\ell \otimes_{k\Sigma_p
    \otimes k\Sigma_q} \otimes N(A_\bullet(p)) \otimes
  N(B_\bullet(q))} \ar[d]^{\mathrm{id} \otimes sh_{A_\bullet(p),B_\bullet(q)}} \\
{\bigoplus_{p+q=\ell}  k\Sigma_\ell \otimes_{k\Sigma_p \otimes k\Sigma_q} \otimes
N(A_\bullet(p) \hot B_\bullet(q))} \ar[d]^{\cong} \\
{N(\bigoplus_{p+q=\ell}
\underline{k\Sigma_\ell}  \hot_{ \underline{k\Sigma_p}
    \hot \underline{k\Sigma_q} } A_\bullet(p) \hot B_\bullet(q)).}}$$
Here, $sh_{A_\bullet(p),B_\bullet(q)}$ denotes the ordinary shuffle
transformation of the simplicial modules $A_\bullet(p)$ and
$B_\bullet(q)$. 
 For a fixed pair $(p,q)$ with $p+q=\ell$ a homogeneous element
$[\sigma \otimes x \otimes y]$ with $\sigma\in \Sigma_\ell$, $x \in
N(A_\bullet(p))$ and $y \in N(B_\bullet(q))$ is sent via $s$ to
$[\sigma \otimes sh(x\otimes y)]$. If we twist first and then apply
$s$ we get $[\sigma\circ \chi(p,q) \otimes sh(y\otimes x)]$.  As the shuffle
transformation is lax symmetric monoidal, this is the image of
$[\sigma \otimes sh(x\otimes y)]$ under $N(tw)$.
\end{proof}
\begin{lem} \label{lem:leftadj}
The functor $N \colon \comsigmas \ra \comsigmadg$ possesses a left adjoint
$$L_N \colon \comsigmadg \ra \comsigmas.$$
\end{lem}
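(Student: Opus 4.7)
The plan is to apply the adjoint functor theorem for locally presentable categories, then verify the result agrees with an explicit coequalizer description. Write $U \colon \comsigmadg \to \sigmadg$ for the dg analogue of the forgetful functor $W$. The key observation is that $UN = NW$ as functors $\comsigmas \to \sigmadg$, so composing the Dold--Kan adjunction $\Gamma \dashv N$ with $C \dashv W$ yields a composite adjunction
\begin{equation*}
C \Gamma \;\dashv\; UN \colon \comsigmas \longrightarrow \sigmadg,
\end{equation*}
which in particular forces $L_N(CV) = C\Gamma V$ for every $V \in \sigmadg$.

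To apply the adjoint functor theorem I would verify three things. First, $N \colon \comsigmas \to \comsigmadg$ preserves all small limits, since limits in both target and source are created by their forgetful functors $W$ and $U$, and $N$ is an equivalence on underlying symmetric sequences (hence preserves limits there). Second, both $\comsigmas$ and $\comsigmadg$ are locally presentable: indeed $\sigmas$ and $\sigmadg$ are locally presentable, their monoidal products preserve filtered colimits in each variable, and so the categories of commutative monoids inside them are locally presentable as well. Third, $N$ is accessible, by the same reasoning as for limits. The adjoint functor theorem then produces the desired left adjoint $L_N$. For bookkeeping purposes, one may further describe $L_N$ as a coequalizer: every $X \in \comsigmadg$ admits a canonical reflexive coequalizer presentation $CUCUX \rightrightarrows CUX \to X$, with parallel arrows $\varepsilon_{CUX}$ and $CU\varepsilon_X$ induced by the counit of $C \dashv U$, and since $L_N$ preserves colimits one obtains
\begin{equation*}
L_N(X) \;=\; \mathrm{coeq}\bigl( C\Gamma UCUX \rightrightarrows C\Gamma UX \bigr)
\end{equation*}
in $\comsigmas$.

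The main obstacle is largely formal: it lies in unpacking what each of the three adjoint functor theorem hypotheses means in this specific monoidal context, rather than in any genuinely hard calculation. The deeper ingredient --- that $N$ even descends to a functor $\comsigmas \to \comsigmadg$ in the first place --- has already been supplied by Lemma \ref{lem:Nlaxsm}. By contrast, constructing a left adjoint to $\Gamma \colon \comsigmadg \to \comsigmas$ would be noticeably more subtle, since $\Gamma$ is only oplax symmetric monoidal and so does not automatically preserve commutative monoid structures.
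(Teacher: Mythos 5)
Your proposal is correct in substance but takes a genuinely different route from the paper. The paper constructs $L_N$ by hand: it defines $L_N(C(X_*)) = C(\Gamma(X_*))$ on free objects, writes an arbitrary $A_*\in\comsigmadg$ as the canonical (reflexive) coequalizer $C(\overline{C(\bar{A}_*)})\rightrightarrows C(\bar{A}_*)\ra A_*$, and then \emph{defines} $L_N(A_*)$ as the coequalizer of $C(\Gamma(\overline{C(\bar{A}_*)}))\rightrightarrows C(\Gamma(\bar{A}_*))$ --- exactly the formula you recover at the end as a corollary. You instead get existence from the adjoint functor theorem for locally presentable categories (limit-preservation and accessibility of $N$ via creation by the forgetful functors plus the levelwise Dold--Kan equivalence, local presentability of the monoid categories), and then pin down $L_NC\cong C\Gamma$ by uniqueness of adjoints from $UN=NW$. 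The paper's route buys an explicit description that is used later (that $L_N$ of a cell object is free, and the computation of $gr(L_NC_*)$ in the proof of Theorem \ref{thm:doldkancommsh}); your route buys existence for free from general machinery and arguably makes the adjointness itself (which the paper's terse proof leaves implicit) more transparent. One bookkeeping point you should repair: $\comsigmas$ and $\comsigmadg$ are the \emph{pointed} categories ($A(0)=k$), so the forgetful functors $W$ and $U$ to all of $\sigmas$ and $\sigmadg$ do not create limits (the product in $C^+_\sigmadg$ is the fibre product over the unit $I$, not the levelwise product), and $C$ is a left adjoint only on reduced inputs. The argument goes through verbatim once you pass to the equivalent categories $C^-_\sigmas$, $C^-_\sigmadg$ of reduced non-unital monoids and use the augmentation-ideal functor to reduced symmetric sequences in place of $W$ and $U$, which is implicitly what the paper does by working with $\bar{A}_*$.
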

\begin{proof}
The construction is standard: If $X_*$ is a reduced object in
$\sigmadg$ then we define $L_N(C(X_*))$ as $C(\Gamma(X_*))$.
Every object $A_* \in \comsigmadg$ can be written as a coequalizer
$$\xymatrix{{ C(\overline{C(\bar{A}_*)})}  \ar@<0.5ex>[r] \ar@<-0.5ex>[r] &
  {C(\bar{A}_*)} \ar[r]
  & {A_*.}}$$
As a left adjoint, $L_N$ has to respect colimits and hence we define
$L_N(A_*)$ as the coequalizer of
$$ \xymatrix{{ C(\Gamma(\overline{C(\bar{A}_*)})) =
    L_NC(\overline{C(\bar{A}_*)})}  \ar@<0.5ex>[r]
  \ar@<-0.5ex>[r] & {L_NC(\bar{A}_*) = C(\Gamma(\bar{A}_*)).}}$$
\end{proof}
Transferring Quillen's sketch of proof \cite[p.~223]{Q69} to the
setting of pointed commutative shuffle algebras yields the following result.
\begin{thm} \label{thm:doldkancommsh}
The pair $(N,L_N)$ induces a Quillen equivalence between the model
categories $\comsigmadg$ and $\comsigmas$ for every commutative ground
ring $k$.
\end{thm}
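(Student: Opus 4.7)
The plan is to adapt Quillen's argument from \cite[p.~223]{Q69}, with Stover's norm isomorphism \eqref{eq:norm} replacing the characteristic zero hypothesis he uses.

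First I would confirm that $(L_N,N)$ is a Quillen adjunction. Fibrations and weak equivalences in both $\comsigmadg$ and $\comsigmas$ are created by the forgetful functors to $\sigmadg$ and $\sigmas$, and the ordinary Dold--Kan pair $(\Gamma,N)$ is already a Quillen equivalence between those categories. Hence $N\colon \comsigmas \to \comsigmadg$ preserves fibrations and acyclic fibrations, and both preserves and reflects weak equivalences. Consequently the Quillen equivalence reduces to showing that the unit $\eta_A\colon A \to NL_N A$ is a weak equivalence for every cofibrant $A \in \comsigmadg$.

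Using the equivalence of categories $\comsigmadg \simeq C^-_\sigmadg$ and its simplicial analogue, I may work with reduced monoids. Every cofibrant object in $C^-_\sigmadg$ is a retract of a cell object built by transfinite composition of pushouts along the generators $\bar{C}(F^r\mathbb{S}^{n-1}) \to \bar{C}(F^r\mathbb{D}^n)$, so by standard cell induction (using that $\bar{C}$, $\Gamma$, and coinvariants preserve filtered colimits, and that weak equivalences are closed under retracts and suitable pushouts) it suffices to verify $\eta$ on free objects $\bar{C}(X_*)$ with $X_*$ a cofibrant reduced symmetric sequence in $\sigmadg$. In that case $L_N \bar{C}(X_*) = \bar{C}(\Gamma X_*)$, and $\eta_{\bar{C}(X_*)}$ decomposes as
$$ \bigoplus_{\ell \geq 1} X_*^{\odot \ell}/\Sigma_\ell \;\longrightarrow\; \bigoplus_{\ell \geq 1} N\bigl((\Gamma X_*)^{\hodot \ell}\bigr)/\Sigma_\ell. $$

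It remains to show that each summand is a quasi-isomorphism, and this is where I expect the main obstacle. The $\Sigma_\ell$-equivariant map $X_*^{\odot \ell} \to N((\Gamma X_*)^{\hodot \ell})$, obtained by iterating the lax symmetric monoidal transformation of Lemma \ref{lem:Nlaxsm} combined with the isomorphism $X_* \cong N\Gamma X_*$, is level by level an iterated Eilenberg--Zilber shuffle map, hence a quasi-isomorphism. The hard step is descending this equivariant quasi-isomorphism to $\Sigma_\ell$-coinvariants; this is precisely the place at which Quillen's rational argument needs $\mathbb{Q}$-coefficients. In our setting, Stover's result \eqref{eq:norm} furnishes the norm isomorphism $(V^{\odot \ell})_{\Sigma_\ell} \cong (V^{\odot \ell})^{\Sigma_\ell}$ for every reduced symmetric sequence $V$, which ensures that $(-)_{\Sigma_\ell}$ preserves quasi-isomorphisms between $\ell$-fold $\odot$-powers of reduced sequences. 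This is the crucial replacement for the characteristic zero hypothesis and lets the argument conclude over an arbitrary commutative ground ring $k$.
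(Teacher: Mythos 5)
Your overall frame (Quillen adjunction is easy; everything reduces to showing the unit $\eta$ is a weak equivalence on cofibrant objects; on a free object $\bar{C}(X_*)$ the unit is a sum of maps $X_*^{\odot \ell}/\Sigma_\ell \to N\bigl((\Gamma X_*)^{\hodot\ell}\bigr)/\Sigma_\ell$ built from Eilenberg--Zilber shuffles) matches the paper's strategy. But there is a genuine gap at the step ``by standard cell induction it suffices to verify $\eta$ on free objects $\bar{C}(X_*)$.'' A cell object of $\comsigmadg$ is \emph{not} of the form $\bar{C}(X_*)$ as a differential graded monoid: attaching cells twists the differential, and only the underlying graded monoid is free. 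So the reduction to free objects cannot be done by closure under retracts, filtered colimits and ``suitable pushouts'' alone; you would have to analyse pushouts along the generating cofibrations in the category of commutative shuffle monoids and show $\eta$ is preserved by them, and this is exactly the kind of step that fails for commutative monoids outside characteristic zero unless extra structure is exploited. The paper's proof supplies the missing device, following Quillen: filter a cell object $C_*$ by powers of its augmentation ideal $I$, prove that the associated graded of a cell object \emph{is} free, $grC_*\cong C(I/I^2)$ (and that $L_N$ of a cell object is free, with $\hat{I}/\hat{I}^2\cong\Gamma(I/I^2)$ by an adjunction/universal-property argument), check that $gr(\eta)$ is the comparison map $\beta$ of Lemma \ref{lem:retract}, and then run a $5$-lemma induction up the filtration, which is finite in each level because the objects are reduced. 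Your proposal has no substitute for this filtration argument.

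The second weak point is the final deduction: Stover's norm isomorphism \eqref{eq:norm} is the vanishing of the degree-zero Tate cohomology, and by itself it does not imply that $(-)_{\Sigma_\ell}$ carries a $\Sigma_\ell$-equivariant quasi-isomorphism between $\ell$-fold powers of reduced sequences to a quasi-isomorphism. What one actually needs is the stronger structural fact underlying Stover's result, namely that $V^{\odot\ell}$ is levelwise an induced (free) $k[\Sigma_\ell]$-module for reduced $V$, so that coinvariants are acyclic on these objects and a bounded-below hyperhomology argument applies; as written, your appeal to \eqref{eq:norm} is not sufficient justification. The paper circumvents this homological algebra entirely: Lemma \ref{lem:retract} uses Stover's split inclusion $j\colon C(M)\to T(M)$ of the free commutative monoid into the tensor algebra, observes that on tensor algebras the comparison map is an Eilenberg--Zilber equivalence, and concludes by a retract argument. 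If you repair the cell-induction step with the augmentation-ideal filtration and replace the norm-map appeal either by the induced-module/acyclicity argument or by the split-inclusion retract trick, your outline becomes essentially the paper's proof.
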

Before we prove the theorem, we state a few lemmata. First, we need to
understand the
associated graded of a free commutative monoid.
\begin{lem} \label{lem:assgr}
Let $S_*$ be a symmetric sequence in graded sets with $S_*(0)=\varnothing$. Then
the associated graded of $C(kS_*)$ with respect to the filtration
coming from powers of the augmenation ideal $m \subset C(kS_*)$ is
isomorphic to $C(kS_*)$:
$$grC(kS_*) =  C(kS_*)/m \oplus m/m^2\oplus m^2/m^3 \oplus\ldots \cong C(kS_*).$$
\end{lem}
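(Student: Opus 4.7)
The plan is to identify the augmentation ideal filtration on $C(kS_*)$ with the natural ``weight'' grading given by the defining decomposition $C(kS_*) = \bigoplus_{i \geq 0} (kS_*)^{\odot i}/\Sigma_i$. Once this identification is established, the statement becomes formal: the associated graded of a graded object with respect to the filtration by ``weight $\geq j$'' is canonically that object.

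First I would unwind the augmentation ideal. Since $S_*(0)=\varnothing$, the symmetric sequence $kS_*$ is reduced, so $C(kS_*)$ carries a unique augmentation $\varepsilon\colon C(kS_*) \to I$ given by projection onto the weight-zero summand, with kernel
\[ m = \bar{C}(kS_*) = \bigoplus_{i \geq 1} (kS_*)^{\odot i}/\Sigma_i. \]

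Next I would verify that for every $j \geq 0$,
\[ m^j = \bigoplus_{i \geq j} (kS_*)^{\odot i}/\Sigma_i. \]
The inclusion $\subseteq$ is immediate since the multiplication of $C(kS_*)$ sends $(kS_*)^{\odot p}/\Sigma_p \odot (kS_*)^{\odot q}/\Sigma_q$ into the weight-$(p+q)$ summand, so weight is superadditive under multiplication. For the reverse inclusion, I would note that $kS_* \subset m$ sits in weight $1$, and that any class $[s_1 \odot \cdots \odot s_i] \in (kS_*)^{\odot i}/\Sigma_i$ with $i \geq j$ can be rewritten as the $j$-fold product $[s_1]\cdots[s_{j-1}]\cdot [s_j \odot \cdots \odot s_i]$ in $C(kS_*)$, with every factor lying in $m$ (the last one having weight $i-j+1 \geq 1$).

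From this equality one obtains $m^j/m^{j+1} \cong (kS_*)^{\odot j}/\Sigma_j$, and summing over $j$ yields
\[ \mathrm{gr}\, C(kS_*) = \bigoplus_{j \geq 0} m^j/m^{j+1} \cong \bigoplus_{j \geq 0} (kS_*)^{\odot j}/\Sigma_j = C(kS_*), \]
as symmetric sequences of graded $k$-modules. The only point requiring attention, and thus the main obstacle, is the surjectivity needed for the reverse inclusion: one must know that the multiplication $(kS_*)^{\odot p}/\Sigma_p \odot (kS_*)^{\odot q}/\Sigma_q \to (kS_*)^{\odot (p+q)}/\Sigma_{p+q}$ hits everything. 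This follows because the cosets $\Sigma_{p+q}/(\Sigma_p \times \Sigma_q)$ are represented by $(p,q)$-shuffles, so that on coinvariants the natural map induced by the shuffle multiplication in the free commutative monoid is surjective onto the target summand.
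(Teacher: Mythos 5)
Your proof is correct and rests on the same two containments as the paper's argument, but it packages them differently. The paper never computes the powers of $m$ explicitly: it takes the canonical morphism of commutative monoids $\xi\colon C(kS_*)\to \mathrm{gr}\,C(kS_*)$ extending the inclusion $kS_*\cong m/m^2$ and checks bijectivity levelwise, using that at level $\ell$ only the weight summands $(kS_*)^{\odot i}/\Sigma_i$ with $i\leq \ell$ contribute, that $(kS_*)^{\odot i}/\Sigma_i\cap m^{i+1}=0$ (injectivity), and that each weight-$j$ summand lies in $m^j$ (surjectivity) --- this last point being asserted without further justification. You instead prove outright that $m^j=\bigoplus_{i\geq j}(kS_*)^{\odot i}/\Sigma_i$ and read off the associated graded, and you justify the inclusion $\supseteq$ by the surjectivity of the multiplication $(kS_*)^{\odot p}/\Sigma_p\odot (kS_*)^{\odot q}/\Sigma_q\to (kS_*)^{\odot (p+q)}/\Sigma_{p+q}$ (via shuffle coset representatives; equivalently, this map is a factor of the quotient $(kS_*)^{\odot (p+q)}\to (kS_*)^{\odot (p+q)}/\Sigma_{p+q}$ because $\odot$ preserves surjections), which makes explicit the step the paper leaves implicit. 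Two minor points: a general element of the weight-$i$ summand at level $\ell$ carries a twist $[\sigma\otimes s_1\otimes\cdots\otimes s_i]$ with $\sigma\in\Sigma_\ell$, not just $[s_1\odot\cdots\odot s_i]$, but since $m^j$ is a $\Sigma_\ell$-stable submodule in every level this costs nothing (your shuffle remark covers it); and since the lemma is subsequently used to identify $\mathrm{gr}\,C_*$ with $C(I/I^2)$ as commutative monoids compatibly with differentials, it is worth adding one sentence that your identification is multiplicative --- which it is, because under $m^j=\bigoplus_{i\geq j}(kS_*)^{\odot i}/\Sigma_i$ the product induced on the associated graded is precisely the weight-graded product of $C(kS_*)$.
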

This result is the analog of the well-know fact that the
associated graded of a free commutative algebra is isomorphic to the
very same free commutative algebra.

As $S_*(0)=\varnothing$ we get that $grC(kS_*)(\ell) = \bigoplus_{i=0}^\ell
m^i/m^{i+1}$ and that $(kS_*^{\odot i}/\Sigma_i)(\ell) = 0$ for $i > \ell$.

\begin{proof}
The indecomposables of $C(kS_*)$ are $m/m^2 \cong kS_*$. The inclusion
map $m/m^2 \ra grC(kS_*)$ extends to a morphism of commutative monoids
$$ \xi \colon C(kS_*) \ra grC(kS_*).$$
In every level $\ell$ elements $p(\ell)$ in
$$C(kS_*)(\ell) = I(\ell) \oplus kS_*(\ell) \oplus (kS_*^{\odot
  2}/\Sigma_2)(\ell) \oplus \ldots $$
have only finitely many non-trivial summands and we denote
$p(\ell)$ by $(p_0(\ell),\ldots, p_\ell(\ell),0,\ldots)$ with $p_i(\ell)
\in   (kS_*^{\odot   i}/\Sigma_i)(\ell)$. The map $\xi$ is then given
by
$$\xi(\ell)(p_0(\ell),\ldots, p_\ell(\ell),0,\ldots) =
([p_0(\ell)],[p_1(\ell)],\ldots,[p_\ell(\ell)],0,\ldots)   $$
where $[p_i(\ell)]$ denotes the equivalence class of $p_i(\ell)$ with 
respect to $m^{i+1}$. 

As the intersection $(kS_*^{\odot i}/\Sigma_i) \cap m^{i+1}$ is zero,
the map $\xi$ is injective.

Let $z$ be an arbitrary element of $grC(kS_*)$. Then $z(\ell)$ has
finitely many non-trivial summands and we write
$$ z(\ell) = ([z_0(\ell)], [z_1(\ell)], \ldots, [z_\ell(\ell)],
0,\ldots).$$
Here, $z_i(\ell) \in m^i(\ell)$ and thus we know that
$z_i(\ell) \in \bigoplus_{r\geq i} (kS_*^{\odot r}/\Sigma_r)(\ell)$ so
we can express $z_i(\ell)$ as
$q_i^i(\ell) + \ldots + q_{\ell}^i(\ell)$ with
$q_j^i(\ell) \in (kS_*^{\odot j}/\Sigma_j)(\ell) \subset m^j$. Therefore
$$\xi(q_0^0(\ell),\ldots, q_\ell^\ell(\ell),0,\ldots) =
([q_0^0(\ell)],\ldots,[q_\ell^\ell(\ell)],0,\ldots) =
([z_0(\ell)],\ldots,[z_\ell(\ell)],0,\ldots). $$
\end{proof}
\begin{lem}
\begin{enumerate}
\item[]
\item
Let $C_*$ be a cell object in $C^+_\sigmadg$ and $I$ its augmentation ideal, then
$ grC_* \cong C(I/I^2)$ in $C^+_\sigmadg$.
\item
If $A_\bullet$ is a free object in $C^+_\sigmas$ and if
$\hat{I}$ is its augmentation ideal, then
$$ gr A_\bullet \cong C(\hat{I}/\hat{I}^2) \in C^+_\sigmas.$$
\end{enumerate}
\end{lem}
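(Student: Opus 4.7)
The plan is to reduce both parts to Lemma~\ref{lem:assgr}: once a cell object (resp.\ free object) is recognized as a free commutative monoid at the level of underlying graded (resp.\ levelwise) structure, the identification of the associated graded follows. Part~(2) is nearly immediate: a free object $A_\bullet$ satisfies $A_q \cong C(kZ_q)$ in each simplicial degree $q$ by Definition~\ref{def:mcat-simplicial}, with $Z_\bullet$ closed under degeneracies. Applying Lemma~\ref{lem:assgr} in each simplicial degree produces isomorphisms $\xi_q \colon C(kZ_q) \to gr A_q$. Since $\xi$ is defined via the universal property of $C$ from the canonical inclusion $m/m^2 \hookrightarrow gr$, it is natural in the input algebra; the $\xi_q$ are therefore compatible with the face and degeneracy maps and assemble into an isomorphism in $C^+_\sigmas$. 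Identifying $\hat{I}/\hat{I}^2$ levelwise with $kZ_\bullet$ yields $gr A_\bullet \cong C(\hat{I}/\hat{I}^2)$.

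For part~(1), I will show that the underlying graded commutative monoid of a cell object $C_*$ (forgetting differentials) is free. Write $C_* = \colim_\alpha C_\alpha$ as a transfinite composition of pushouts along the generating cofibrations $C(F^{r_\alpha}(\mathbb{S}^{n_\alpha-1})) \to C(F^{r_\alpha}(\mathbb{D}^{n_\alpha}))$. As a map of symmetric sequences of graded $k$-modules (ignoring differentials), the inclusion $F^r(\mathbb{S}^{n-1}) \hookrightarrow F^r(\mathbb{D}^n)$ is split, with cokernel $F^r(k[n])$ concentrated in chain degree $n$. The identification $C(V \oplus W) \cong C(V) \odot C(W)$ then makes each pushout step, at the level of graded commutative monoids, into the adjunction of a $\odot$-factor $C(F^{r_\alpha}(k[n_\alpha]))$. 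By transfinite induction and compatibility of $C$ and $\odot$ with filtered colimits, $C_*$ is isomorphic as a graded commutative monoid to $C(M)$, where $M$ is the direct sum of all attached cells; moreover $M$ is naturally identified with the indecomposables $I/I^2$. The proof of Lemma~\ref{lem:assgr} extends verbatim from symmetric sequences in graded sets to symmetric sequences in graded $k$-modules with $M(0)=0$, yielding an isomorphism $\xi \colon C(M) \to gr C(M)$. Because $\xi$ is defined through the universal property in $C^+_\sigmadg$, it is a morphism of commutative monoids in chain complexes and respects the differentials inherited from $C_*$, giving $gr C_* \cong C(I/I^2)$ in $C^+_\sigmadg$.

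The main obstacle will be the transfinite induction in part~(1): one must check that the successive identifications of pushouts with $\odot$-sums of free algebras (at the level of graded commutative monoids) are coherent and survive the passage to the transfinite colimit. Particular care is needed because the filtration by powers of the augmentation ideal depends on the full chain-complex structure of $C_*$, whereas the recognition of freeness uses only the underlying graded structure; one must therefore verify that the graded isomorphism $C_* \cong C(M)$ intertwines the two resulting descriptions of the associated graded.
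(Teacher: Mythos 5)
Your proposal is correct and follows essentially the same route as the paper: recognize the underlying graded (resp.\ levelwise) commutative monoid of the cell (resp.\ free) object as a free commutative monoid, apply Lemma~\ref{lem:assgr} to identify the associated graded, and then check compatibility with the differential resp.\ the simplicial structure maps. The only differences are matters of detail: you spell out the cell-attachment induction that the paper simply asserts ("the underlying commutative monoid is of the form $C(X_*)$"), and you replace the paper's explicit derivation and face-map computations by the equivalent universal-property/naturality argument for $\xi$.
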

\begin{proof}
The underlying commutative monoids in symmetric sequences of graded
modules of $C_*$ and $A_\bullet$ are of the form $C(X_*)$ and
$C(Y_\bullet)$, where $X_*$ and $Y_\bullet$ are trivial in level zero
and are degreewise free $k$-modules. Thus, by Lemma \ref{lem:assgr} 
we know that the canonical maps
$$ C(I/I^2) \ra  grC_*$$
and
$$C(\hat{I}/\hat{I}^2) \ra gr A_\bullet$$
are isomorphisms of underlying commutative monoids in symmetric
sequences in graded $k$-modules. It remains to show that these
isomorphism are compatible with the differential on $C_*$ and the
simplicial structure maps of $A_\bullet$.

Let $[\sigma\otimes x_1
\otimes \ldots \otimes x_r]$ denote a generator in $(I/I^2)^{\odot
  r}/\Sigma_r)(\ell)$ (\ie, $\sigma  \in \Sigma_r$, $x_i \in
I/I^2(p_i)$ for some suitable $p_i$) and let $d$ be the differential
in $C_*$. Then
$$ d[\sigma\otimes x_1 \otimes \ldots \otimes x_r] = \pm \sum_{j=1}^r
[\sigma\otimes x_1 \otimes \ldots \otimes dx_j \otimes \ldots  \otimes x_r]$$
and the canonical map $\xi$ sends this element to $\pm
\sum_{j=1}^r\mu(\sigma\otimes x_1 \otimes \ldots \otimes dx_j \otimes
\ldots  \otimes x_r)$ where $\mu$ denotes the multiplication in
$C_*$. As $d$ is a derivation the latter is equal
to $d(\mu(\sigma\otimes x_1 \otimes \ldots \otimes x_r))$.

The argument for the simplicial structure maps is similar. We spell it
out for the face maps. The $i$th face map, $d_i$, sends a generator
$[\sigma\otimes x_1 
\otimes \ldots \otimes x_r]$ in  $((\hat{I}/\hat{I}^2)^{\odot
  r}/\Sigma_r)(\ell)$ to  $[\sigma\otimes d_ix_1
\otimes \ldots \otimes d_ix_r]$ and applying $\xi$ yields
$\mu(\sigma\otimes d_ix_1 \otimes \ldots \otimes d_ix_r)$. As
$A_\bullet$ is a simplicial monoid, this is equal to $d_i(\mu(\sigma\otimes x_1
\otimes \ldots \otimes x_r))$.
\end{proof}
\begin{lem}
If $C_* \in \comsigmadg$ is a cell object, then $L_NC_*$ is free.
\end{lem}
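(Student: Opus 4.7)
The plan is to proceed by transfinite induction on the cell filtration of $C_*$, exploiting the fact that $L_N$ is a left adjoint (Lemma~\ref{lem:leftadj}) and hence preserves pushouts and sequential colimits. A cell object in $\comsigmadg$ is a transfinite composition of pushouts along the generating cofibrations $C(F^r(\mathbb{S}^{n-1})) \to C(F^r(\mathbb{D}^{n}))$ with $r,n \geq 1$, so it suffices to show that the images of these generators under $L_N$ are \emph{free extensions} in $\comsigmas$ in the sense of Definition~\ref{def:mcat-simplicial}, and that pushouts of, and transfinite compositions along, free extensions remain free.

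The first step is to describe $L_N$ applied to the generators. By Lemma~\ref{lem:leftadj} and identity~\eqref{eq:gammafr} we have $L_N(C(F^r(\mathbb{D}^{n}))) = C(F^r(\Gamma(\mathbb{D}^{n})))$ and $L_N(C(F^r(\mathbb{S}^{n-1}))) = C(F^r(\Gamma(\mathbb{S}^{n-1})))$. The classical Dold--Kan formula $\Gamma(X)_q = \bigoplus_{\eta\colon[q]\twoheadrightarrow[p]} X_p$ shows that both $\Gamma(\mathbb{D}^n)_q$ and $\Gamma(\mathbb{S}^{n-1})_q$ are free $k$-modules on sets of surjections; writing $S^{n-1}_q$ for surjections $[q]\twoheadrightarrow[n-1]$ and $E^n_q$ for surjections $[q]\twoheadrightarrow[n]$, the inclusion $\Gamma(\mathbb{S}^{n-1}) \hookrightarrow \Gamma(\mathbb{D}^n)$ is induced by a levelwise inclusion of sets with levelwise complement $E^n_q$. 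The key structural observation is that, while $E^n_\bullet$ is \emph{not} stable under face operators, both $S^{n-1}_\bullet$ and $E^n_\bullet$ \emph{are} stable under degeneracies, since precomposition with a codegeneracy $\sigma_i$ preserves the image of a surjection.

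For the inductive step, I would assume that $A_\bullet = L_N(C_*^\beta)$ is free with $A_q = C(kZ_q)$ for a symmetric sequence of sets $Z_q$ closed under degeneracies. Colimits in $\comsigmas$ can be computed in each simplicial degree (in the category of commutative monoids in $\modules\Sigma$), and in simplicial degree $q$ the splitting $k[(D^n)_q] = k[S^{n-1}_q] \oplus k[E^n_q]$ turns the pushout of $C(F^r(kS^{n-1}_q)) \to C(F^r(kD^n_q))$ along $C(F^r(kS^{n-1}_q)) \to A_q$ into
$$A_q \odot C\bigl(F^r(kE^n_q)\bigr) \;\cong\; C\bigl(k(Z_q \sqcup F^r(E^n_q))\bigr).$$
Since $E^n_\bullet$ is closed under degeneracies, so is $Z_\bullet \sqcup F^r(E^n_\bullet)$, verifying the compatibility condition in Definition~\ref{def:mcat-simplicial}. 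The base case $L_N(I) = \underline{k}$ is free on the empty generator set, and transfinite compositions of free extensions are free with generator set the colimit of the individual generators; together these complete the induction.

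The main obstacle is precisely the concrete identification of $\Gamma(\mathbb{D}^n)$ and $\Gamma(\mathbb{S}^{n-1})$ together with the verification that the complement $E^n_\bullet$ is closed under degeneracies. This is the only place where the specific structure of the Dold--Kan functor enters essentially; without a complement that is stable under degeneracies, the successive pushouts in $\comsigmas$ would enlarge the generator set in a way that could fail to be simplicial-set-theoretic, and the inductive step would break down. Once that point is secured, the remainder of the proof is a routine bookkeeping exercise following the cell filtration and using the colimit-preservation of $L_N$.
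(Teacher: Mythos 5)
Your proof is correct and follows essentially the same route as the paper: reduce to the generating cofibrations using that $L_N$ preserves colimits, identify $L_N$ of the generators as $C(F^r\Gamma(\mathbb{S}^{n-1})) \to C(F^r\Gamma(\mathbb{D}^{n}))$, and exhibit a degeneracy-closed set of new generators. The only (cosmetic) difference is that you describe $\Gamma(\mathbb{D}^n)$ via the Dold--Kan surjection formula, with complement the surjections $[q]\twoheadrightarrow[n]$, whereas the paper uses the simplicial-set models $\bar{k}(\Delta^{d-1}/\partial\Delta^{d-1}) \to \bar{k}(\Delta^{d}/\Lambda^{d}_{d})$ and takes the simplices not in the image of $d_d$ --- these give exactly the same generator sets.
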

\begin{proof}
Recall that a cell object $C_*$ in $\comsigmadg$ is a sequential limit
of pushouts of the form
$$ \xymatrix{
{\bigodot_{r\in R} \bigodot_{d \in D} C(F^r\mathbb{S}^{d-1})}
\ar[r] \ar[d] & {C_*^n} \ar[d] \\
{\bigodot_{r\in R} \bigodot_{d \in D} C(F^r\mathbb{D}^{d})} \ar[r] & {C_*^{n+1}}
}$$
where the left vertical map is induced by the inclusions of spheres
into disks. It therefore suffices to show that one such map
$L_NC(F^r\mathbb{S}^{d-1}) \ra L_NC(F^r\mathbb{D}^{d})$ is free. This
works similar to Quillen's argument \cite[Proof of 4.4]{Q69}:
$L_NC(F^r\mathbb{S}^{d-1})$ is $CF^r\Gamma(\mathbb{S}^{d-1})$ and this
in turn can be identified with
$CF^r(\bar{k}\Delta^{d-1}/\partial\Delta^{d-1})$. Similarly the
simplicial model of the $d$-disc can be chosen as
$\Delta^d/\Lambda^d_d$ where $\Lambda^d_d$ is the $d$-horn of
dimension $d$, \ie, the simplicial set that is generated by all top
faces of $\id_d \in \Delta^d$ but the last one. The inclusion of
$\mathbb{S}^{d-1}$ into $\mathbb{D}^d$ can then be modelled by the map
$d_d\colon \Delta^{d-1} \ra \Delta^{d}$. We can then choose $Z_q$ to
be the symmetric sequence in sets that is concentrated in level $r$ and is
generated as a module by all simplices in  $\Delta^d/\Lambda^d_d$ that
are not in the image of $\Delta^{d-1}/\partial\Delta^{d-1}$ under
$d_d$.
\end{proof}
For an inductive step we need the following auxiliary result about
homology and free objects.
\begin{lem} \label{lem:retract}
For every $X_* \in \sigmadg$ the canonical map
$\beta\colon H_*(CX_*) \ra  H_*(NC(\Gamma(X_*)) = \pi_*C(\Gamma(X_*))$
is an isomorphism.
\end{lem}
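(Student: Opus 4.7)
The plan is to decompose $\beta$ according to polynomial degree and reduce to a combination of iterated Eilenberg--Zilber with Stover's norm isomorphism. Since $N$ is part of an equivalence of categories, it commutes with all colimits, so it distributes over the direct sums and the coequalizers defining $\Sigma_i$-coinvariants. Thus
$$NC(\Gamma(X_*)) \cong \bigoplus_{i\geq 0} N(\Gamma(X_*)^{\hodot i})_{\Sigma_i}$$
parallels $CX_* = \bigoplus_{i\geq 0}(X_*^{\odot i})_{\Sigma_i}$, and $\beta$ splits as a direct sum $\bigoplus_i \beta_i$. The summand $i=0$ is $I$ on both sides; it remains to treat $i\geq 1$.

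Next, I would iterate the lax symmetric monoidal transformation $s$ of Lemma \ref{lem:Nlaxsm} to obtain a natural, $\Sigma_i$-equivariant map
$$\tilde{s}_i \colon X_*^{\odot i} = N\Gamma(X_*)^{\odot i} \lra N(\Gamma(X_*)^{\hodot i}).$$
The $\Sigma_i$-equivariance follows from the compatibility of $s$ with the twist verified in Lemma \ref{lem:Nlaxsm}, since transpositions generate $\Sigma_i$. Evaluated at a fixed level $\ell$, $\tilde{s}_i(\ell)$ is, on each summand indexed by a decomposition $p_1+\cdots+p_i=\ell$, the classical iterated Eilenberg--Zilber shuffle of simplicial modules, tensored up with the free $k\Sigma_{p_1}\otimes\cdots\otimes k\Sigma_{p_i}$-module $k\Sigma_\ell$. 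The shuffle map is a quasi-isomorphism and tensoring with a free module preserves quasi-isomorphisms, so $\tilde{s}_i$ is a quasi-isomorphism of chain complexes with $\Sigma_i$-action.

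Then I would apply $\Sigma_i$-coinvariants. For reduced $X_*$ the hypotheses of Stover's norm isomorphism (\ref{eq:norm}) hold bidegreewise for both the source $X_*^{\odot i}$ and the target $N(\Gamma(X_*)^{\hodot i})$, identifying $\Sigma_i$-coinvariants with $\Sigma_i$-invariants. Since invariants are left exact and coinvariants are right exact, this agreement forces both functors to be exact on these reduced tensor powers; in particular they send quasi-isomorphisms to quasi-isomorphisms. Applying this to $\tilde{s}_i$ yields that $(\tilde{s}_i)_{\Sigma_i}$ is a quasi-isomorphism, and under the identifications of the first paragraph this is exactly $\beta_i$ at chain level. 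Summing over $i$ shows $\beta$ is an isomorphism.

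The main obstacle is exactly the exactness of $\Sigma_i$-coinvariants on reduced tensor powers in the chain complex setting, which is where Stover's theorem is essential and replaces the characteristic-zero hypothesis of Quillen's original argument; the rest is additivity of homology combined with iterated Eilenberg--Zilber. A minor technical point to verify is that Stover's norm isomorphism, originally stated for symmetric sequences of modules, can be applied in every (level, chain-degree) bidegree, which holds because his argument is purely algebraic and the relevant bidegree pieces are reduced symmetric sequences of $k$-modules.
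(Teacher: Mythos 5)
Your word-length decomposition of $\beta$ into pieces $\beta_i$, and the identification of $\beta_i$ with the $\Sigma_i$-coinvariants of the iterated shuffle map $\tilde s_i$ (a quasi-isomorphism by Eilenberg--Zilber), are fine. The gap is in the step where you pass to coinvariants. The norm isomorphism (\ref{eq:norm}) only tells you that invariants and coinvariants agree on each bidegree piece of the source and target of $\tilde s_i$; the inference ``invariants are left exact, coinvariants are right exact, they agree on these objects, hence both are exact there and therefore send quasi-isomorphisms to quasi-isomorphisms'' is not valid. To commute coinvariants with homology you would have to apply exactness to short exact sequences involving the cycles, boundaries and homology of the complexes $X_*^{\odot i}(\ell)$ and $N(\Gamma(X_*)^{\hodot i})(\ell)$, and these subquotients need not satisfy the norm isomorphism: the homology of a complex of induced $\Sigma_i$-modules can consist of trivial modules (think of $k[\Sigma_2]\ra k[\Sigma_2]$, multiplication by $1-t$, whose homology is $k$ with trivial action, on which the norm is multiplication by $2$). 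In general an equivariant quasi-isomorphism between complexes whose chain modules merely have bijective norm maps need not remain a quasi-isomorphism after coinvariants. What you actually need is vanishing of the higher group homology $H_q(\Sigma_i;-)$, $q>0$, of the chain modules, so that naive coinvariants compute homotopy orbits, which are quasi-isomorphism invariant; this does hold here, because each bidegree piece of $X_*^{\odot i}$ is a sum of modules induced from the trivial subgroup (the block-permutation action on $k[\Sigma_\ell]$ is free, using that $X_*$ is reduced), and the pieces of $N(\Gamma(X_*)^{\hodot i})$ are equivariant direct summands of such -- but none of this is supplied by (\ref{eq:norm}) as you quote it, and your bidegree pieces of the target are in any case only summands of reduced tensor powers, not tensor powers themselves.

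For comparison, the paper sidesteps this entirely: by Stover's result the inclusion $j\colon C(M)\ra T(M)$ of the free commutative monoid into the free associative monoid is naturally split, so $\beta$ is a natural retract of the analogous map $\beta'$ for tensor algebras, where no coinvariants intervene and the Eilenberg--Zilber equivalence applies levelwise; a retract of an isomorphism is an isomorphism. If you want to keep your decomposition, the cleanest repairs are either the homotopy-orbit argument sketched above, or to use the naturality of Stover's splitting $M^{\odot i}/\Sigma_i \ra M^{\odot i}$ to exhibit each $\beta_i$ as a retract of the comparison map on $i$-th tensor powers, which is where Eilenberg--Zilber does the work.
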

\begin{proof}
Stover shows \cite[9.10]{St} that for any reduced symmetric
sequence $M$ in \modules \,  the free commutative algebra generated by
$M$, $C(M)$, embeds into the free associative algebra $T(M) =
\bigoplus_{n \geq 0} M^{\odot n}$ via a split inclusion, $j\colon C(M)
\ra T(M)$. Thus we can transfer Quillen's retract argument \cite[Proof
of 4.5]{Q69} to our context and consider the commutative square
$$
\xymatrix{
 {H_*(CX_*)} \ar@/_1ex/[d]_{j_*} \ar[r]^{\beta}&
{H_*NC(\Gamma(X_*))}\ar@/_1ex/[d]_{j_*}\\
{H_*(TX_*)} \ar@/_1ex/[u]_\varrho \ar[r]^{\beta'}& {H_*NT(\Gamma(X_*)).}
\ar@/_1ex/[u]_\varrho
}
$$
Here, $\varrho$ is induced by a splitting of $j$. In every level the
tensor algebra $TX_*$ consists of copies of tensor powers of
$X_*$. The Eilenberg-Zilber equivalence turns $\beta'$ into an
isomorphism. As $\beta$ is a retract of $\beta'$ it is an isomorphism as well.
\end{proof}
\begin{proof}[Proof of Theorem \ref{thm:doldkancommsh}]
This proof is an adaptation of \cite[Proof of 4.6]{Q69}.

Let $C_* \in \comsigmadg$ and denote by $\hat{I}$ the augmentation
ideal of $L_NC_*$ and by $I$ the augmentation ideal of $C_*$. The powers
of $\hat{I}$ filter $\hat{I}$ and this filtration respects the
multiplication: $\hat{I}^r\cdot \hat{I}^s \subset \hat{I}^{r+s}$. As
$N$ is lax monoidal, we get that $N\hat{I}^r\cdot N\hat{I}^s \subset
N\hat{I}^{r+s}$ and the unit of the adjunction $\eta\colon C_* \ra
NL_NC_*$ satisfies $\eta(I^r) \subset N\hat{I}^r$.

We consider the associated graded of $C_*$, $ grC_* = \bigoplus_{r \geq 0}
I^r/I^{r+1}$, and similarly $gr(L_NC_*) = \bigoplus_{r\geq
  0}\hat{I}^r/\hat{I}^{r+1}$.  
As $N$ is exact we obtain 
$$ Ngr(L_NC_*) = N(\bigoplus_{r\geq 0}\hat{I}^r/\hat{I}^{r+1}) \cong
\bigoplus_{r\geq 0} N(\hat{I}^r/N\hat{I}^{r+1}).$$ 

We denote by $\sf{triv}$ the full subcategory of $C^-_\sigmadg$
consisting of objects with trivial multiplication and by $i$ the
inclusion functor from $\sf{triv}$ to $C^-_\sigmadg$. Let  $A_*$ be an
object of $\sf{triv}$ and  and let $C_* \in \comsigmadg$. Then the
morphisms in $C^-_\sigmadg$ from $\bar{C}$ to $i(A_*)$ are precisely
the maps in $\sf{triv}$ from $\bar{C}/\bar{C}^2$ to $A_*$.

Similarly, the morphisms in $C^-_\sigmadg$ from $N\hat{I}$ to $i(A_*)$
are in bijection with the morphisms in $\sf{triv}$ from
$N\hat{I}/N\hat{I}^2$ to $A_*$ but as $N\hat{I}/N\hat{I}^2$ is
isomorphic to $N(\hat{I}/\hat{I}^2)$ and as the category $\sf{triv}$
is equivalent to the category of reduced symmetric sequences of chain
complexes we can identify this set of morphisms with the morphisms in
symmetric sequences of simplicial vector spaces from
$\hat{I}/\hat{I}^2$ to $\Gamma(A_*)$ and these in turn correspond to
maps in $C^-_\sigmas$ from $\hat{I}$ to $i(\Gamma(A_*))$. As $L_N$
is left adjoint to $N$ we finally get a bijection with the morphisms
from $I$ to $N(i(\Gamma(A_*)))$ and the latter is isomorphic to
$i(A_*)$. Therefore, $I/I^2$ and $\hat{I}/\hat{I}^2$ satisfy the same
universal property concerning maps from $I$ to $i(A_*)$ and hence
they are isomorphic.

Note that an adjunction argument also shows that $\hat{I}/\hat{I}^2
\cong \Gamma(I/I^2)$.
The induced map on the associated graded induced by the unit of the
$(L_N,N)$-adjunction is therefore of the form
$$ gr(\eta)\colon gr(C_*) \cong C(I/I^2) \ra Ngr(L_NC_*) \cong
N(C(\hat{I}/\hat{I}^2)) \cong NC\Gamma(I/I^2). $$
As it sends the generators $I/I^2$ to $\hat{I}/\hat{I}^2$ it is of the
form as $\beta$ in Lemma \ref{lem:retract} and thus it is a weak
equivalence. A levelwise $5$-lemma argument and an induction then
shows that $H_*(I/I^r)(\ell) \cong H_*(N(\hat{I}/\hat{I}^r))(\ell)$ for all $r
\geq 2$. If we fix an $\ell$, then -- as $I$ and $\hat{I}$ are reduced
-- for all  $r \geq \ell +1$ we get  $I^r(\ell)=\hat{I}^r(\ell) =
0$ and thus
$$H_*(I(\ell)) \cong H_*(I/I^r)(\ell) \cong
H_*(N(\hat{I}/\hat{I}^r))(\ell) \cong H_*N(\hat{I}),$$
so $\eta$ is a weak equivalence.
\end{proof}

\begin{rem}
Of course, it is natural to ask whether one can extend the result
above and establish a Quillen equivalence between (reduced) $E_\infty$-monoids
and commutative monoids in symmetric sequences, or more generally,
whether for (certain types of) operads $P$,  homotopy $P$-algebras and
$P$-algebras have equivalent homotopy categories. We plan to pursue
this question in future work.
\end{rem}

\section{Commutative $Hk$-algebra spectra}

Brooke Shipley proved \cite{shipley} that there is a chain of Quillen
equivalences between the model categories of $Hk$-algebra
spectra and differential graded $k$-algebras. This chain is derived from a
composite of functors from $Hk$-module spectra in symmetric
spectra  via the 
category of symmetric spectra in simplicial $k$-modules and
symmetric spectra in non-negatively graded chain complexes: 
$$ \xymatrix@1{
{Hk\text{-mod}} \ar[r]^{Z} & {\Sp^\Sigma(\smod)} \ar[r]^{\phi^*N} &
{\Sp^\Sigma(\dgmod).} } 
$$
Here, $\Sp^\Sigma(\smod)$ is the category of symmetric sequences in
simplicial $k$-modules that are modules over the commutative monoid
$\tilde{k}(\mathbb{S})$ with
$\tilde{k}(\mathbb{S})(\ell)$ being the simplicial free
$k$-module  generated by the non-basepoint simplices of the
$\ell$-sphere. Similarly, $\Sp^\Sigma(\dgmod)$ is the category of
symmetric sequences in non-negatively graded chain complexes of
$k$-modules with a module structure over the commutative monoid
$k[\bullet]$ with $k[\bullet](\ell) =
k[\ell]$ being the chain complex  with chain
group $k$ concentrated in chain degree
$\ell$ with trivial $\Sigma_\ell$-action. These categories of
symmetric spectra are symmetric monoidal with respect to the smash
product which is nothing but the coequalizer of the tensor product of
symmetric sequences where the action of the respective commutative
monoid on the left and right factor is identified. 

The functor $Z$ is defined
as 
$$ Z(M) = \tilde{k}(M)
\wedge_{\tilde{k}(Hk)} Hk$$ 
and 
$$\phi^*N(A_\bullet)(\ell) = N(A_\bullet(\ell))$$
with $k[\bullet]$-module structure given by the equivalences  
$$ \phi(\ell) \colon k[\ell] \simeq
N(\tilde{k}(\mathbb{S})(\ell)).$$

Shipley shows \cite[p.~372]{shipley} that $Z$ is strong symmetric
monoidal and that $\phi^*N$ is lax symmetric monoidal. 

\begin{prop} \label{prop:hkexamples} 
The forgetful functors $V_1\colon \Sp^\Sigma(\smod) \ra \sigmas$ and
$V_2\colon \Sp^\Sigma(\nonnegch) \ra \sigmadg$ are lax symmetric
monoidal. Hence every commutative $Hk$-algebra spectrum $A$ 
gives rise to a commutative simplicial shuffle algebra $V_1(ZA)$ and a
commutative differential graded shuffle algebra $V_2(\phi^*N(ZA))$. 
\end{prop}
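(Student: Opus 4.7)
The plan is to exploit the general principle that for any commutative monoid $R$ in a symmetric monoidal category $(\mathcal{C}, \otimes, \mathbf{1})$, the forgetful functor from $R$-modules to $\mathcal{C}$ is canonically lax symmetric monoidal: the structure map $M \otimes N \to M \otimes_R N$ is the coequalizer projection that defines $\otimes_R$, and the unit map $\mathbf{1} \to R$ is just the unit of $R$. Coherence of this lax structure follows formally from the universal property of the defining coequalizers, together with the symmetry of $\otimes$ and the commutativity of $R$.

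First I would specialize this to $\mathcal{C} = \sigmas$ with $R = \tilde{k}(\mathbb{S})$. By construction $\Sp^\Sigma(\smod)$ is the category of $\tilde{k}(\mathbb{S})$-modules inside $(\sigmas, \hodot, I)$, and its smash product is given (as already indicated in the excerpt) by the coequalizer
\[
M \hodot \tilde{k}(\mathbb{S}) \hodot N \rightrightarrows M \hodot N \longrightarrow M \wedge N.
\]
The projection $M \hodot N \to M \wedge N$ together with the unit map $I \to \tilde{k}(\mathbb{S})$ then equip $V_1$ with a lax symmetric monoidal structure. The identical argument applied to $\mathcal{C} = \sigmadg$ with $R = k[\bullet]$ handles $V_2$.

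For the second assertion, recall that Shipley proves $Z$ is strong symmetric monoidal and $\phi^*N$ is lax symmetric monoidal, so both composites $V_1 \circ Z$ and $V_2 \circ \phi^*N \circ Z$ are lax symmetric monoidal as compositions of lax symmetric monoidal functors. Since any lax symmetric monoidal functor sends commutative monoids to commutative monoids, applying these composites to a commutative $Hk$-algebra spectrum $A$ produces commutative monoids in $\sigmas$ and in $\sigmadg$ respectively, which by Definition \ref{def:shufflealgs} is precisely what is meant by a commutative simplicial (respectively differential graded) shuffle algebra.

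The main bookkeeping obstacle is verifying that the coequalizer-projection maps indeed satisfy the associativity pentagon, the unitality triangles, and the symmetry hexagon required of a lax symmetric monoidal structure. These reduce, via the universal property of the coequalizers defining $\wedge$, to the corresponding axioms for $\hodot$ (respectively $\odot$), combined with the commutativity of $\tilde{k}(\mathbb{S})$ (respectively $k[\bullet]$). Since the construction is entirely formal and well known for module categories over commutative monoids in any symmetric monoidal category with the requisite coequalizers, no genuinely new subtlety arises in the present setting.
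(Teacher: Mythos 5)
Your proposal is correct and follows essentially the same route as the paper: the lax symmetric monoidal structure on the forgetful functor is given by the coequalizer projection $A_\bullet \hodot B_\bullet \ra A_\bullet \wedge B_\bullet$ (plus the unit map), with coherence coming from the symmetry of $\hodot$ (resp.\ $\odot$) and the commutativity of $\tilde{k}(\mathbb{S})$ (resp.\ $k[\bullet]$), and the second assertion follows by composing with Shipley's strong/lax symmetric monoidal functors $Z$ and $\phi^*N$, exactly as the paper intends.
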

\begin{proof}
For $A_\bullet, B_\bullet \in \Sp^\Sigma(\smod)$ there is a binatural
projection map 
$$ \pi_1(A_\bullet,B_\bullet) \colon V_1(A_\bullet) \hodot V_1(B_\bullet) = A_\bullet
\hodot B_\bullet \ra A_\bullet
\hodot_{\tilde{\mathbb{Z}}(\mathbb{S})} B_\bullet = A_\bullet \wedge B_\bullet.$$ 

As $\tilde{k}(\mathbb{S})$ is a commutative monoid and as
$\hodot$ is a symmetric monoidal structure, $\pi_1$ turns $V_1$ into a
lax symmetric monoidal functor. An analogous argument applies to
$V_2$. 
\end{proof}

\begin{rem}
If $A$ is a commutative $Hk$-algebra spectrum, then $V_1(ZA)$
is pointed if $ZA(0) = \underline{k}$, for instance if $A$ is
a square-zero extension $A= Hk \vee M$ where $M$ is an
$Hk$-module concentrated in positive levels. 
\end{rem}

\end{document}